\documentclass[reqno]{amsart}
\usepackage[dvipsnames]{xcolor}
\usepackage{amssymb,amscd,bbm,verbatim,graphicx,mathtools,enumitem}
\usepackage{textcomp}
\usepackage[mathscr]{euscript}
\usepackage[colorlinks]{hyperref}
% THEOREMS -------------------------------------------------------
\newtheorem{thm}{Theorem}[section]
\newtheorem{cor}[thm]{Corollary}
\newtheorem{lem}[thm]{Lemma}

\theoremstyle{definition}
\newtheorem{defn}[thm]{Definition}

\newtheorem{hyp}[thm]{Hypothesis}

\numberwithin{equation}{section}

% Abbreviations

% MATH -----------------------------------------------------------
\newcommand{\vertiii}[1]{{\left\vert\kern-0.25ex\left\vert\kern-0.25ex\left\vert #1\right\vert\kern-0.25ex\right\vert\kern-0.25ex\right\vert}}

\newcommand{\BVl}{\operatorname{BV}_{\rm loc}}

\newcommand{\id}{\mathbbm 1}

\newcommand{\loc}{{\rm loc}}

\newcommand{\iOmega}{(a,b)}

% ----------------------------------------------------------------
\begin{document}
\title[]{sign-changing points of solutions of homogeneous sturm-liouville equations with measure-valued coefficients}%
\author{ahmed ghatasheh and rudi weikard}%

\address{A.G.: mathematics department, the ohio state university at marion, marion, oh 43302, usa}
\email{ghatasheh.1@osu.edu}%
\address{R.W.: department of mathematics, university of alabama at birmingham, birmingham, al 35226-1170, usa}%
\email{weikard@uab.edu}%

\date{\today}%
\thanks{Name of \TeX{} file: \texttt{ZS1.tex}}%

%\keywords{47B25, 47A10, 42A38, 47A06;  also 34L05, 81Q10}

\keywords{Sturm Separation Theorem, Sturm Comparison Theorem, Measure-Valued Coefficients}

\begin{abstract}
In this paper we investigate sign-changing points of nontrivial real-valued solutions of homogeneous Sturm-Liouville differential equations of the form $-d(du/d\alpha)+ud\beta=0$, where $d\alpha$ is a positive Borel measure supported everywhere on $\iOmega$ and $d\beta$ is a locally finite real Borel measure on $\iOmega$.
Since solutions for such equations are functions of locally bounded variation, sign-changing points are the natural generalization of zeros. We prove that sign-changing points for each nontrivial real-valued solution are isolated in $\iOmega$. We also prove a Sturm-type separation theorem for two nontrivial linearly independent solutions, and conclude the paper by proving a Sturm-type comparison theorem for two differential equations with distinct potentials.
\end{abstract}

\maketitle

\section{introduction}

When $1/p$ and $q$ are real-valued locally integrable functions on $\iOmega$, $-\infty\leq a<b\leq\infty$, and when $p>0$ on $\iOmega$, zeros of every nontrivial real-valued solution of the famous homogeneous Sturm-Liouville differential equation
\begin{equation}\label{trpz8104}
-(py')'+qy=0
\end{equation}
are isolated in $\iOmega$. This fact is the key to establish two big results in Sturm-Liouville theory, namely the Sturm separation theorem and the Sturm comparison theorem. These two celebrated results are due to Sturm \cite{Sturm1836a} and they date back to 1836. The Sturm separation theorem states that zeros of two linearly independent solutions of Equation \eqref{trpz8104} are interlaced. The Sturm comparison theorem states that if $u$ and $v$ are nontrivial solutions of \eqref{trpz8104} and $-(py')'+\tilde{q}y=0$, respectively, $u$ vanishes at $s$ and $t$, and $\tilde q\leq q$ but different on a set of positive Lebesgue measure, then $v$ vanishes at some point between $s$ and $t$.

In this paper we consider homogeneous differential equations of the form
\begin{equation}\label{sgbzpie9573}
-d\Big(\frac{dy}{d\alpha}\Big)+yd\beta=0
\end{equation}
when $d\alpha$ is a positive measure supported everywhere on $\iOmega$ and when $d\beta$ is a locally finite real Borel measure on $\iOmega$. In other words $\alpha$ is a strictly increasing function on $\iOmega$ and $\beta$ is a real-valued function of locally bounded variation on $\iOmega$. Solutions for Equation \eqref{sgbzpie9573} are functions of locally bounded variation on $\iOmega$. For any nontrivial real-valued solution $u$ of Equation \eqref{sgbzpie9573} it is possible to have $u^-(x_0)=0$, $u^+(x_0)=0$, or $u^-(x_0)u^+(x_0)<0$ for some $x_0\in\iOmega$, where $u^-$ and $u^+$ are the left- and right-continuous representatives of $u$. The set of all such points, which we denote by $\mathcal{Z}_{\iOmega}(u)$, is the natural candidate for zeros when we step up from locally integrable coefficient (Equation \eqref{trpz8104}) to measure valued coefficients (Equation \eqref{sgbzpie9573}). The first question that comes to mind is whether $\mathcal{Z}_{\iOmega}(u)$ is a set of isolated points. If so, then the next question is whether we can develop a Sturm-type separation and comparison theorems for nontrivial real-valued solutions of equations of the form \eqref{sgbzpie9573}.

The goal of the present work is to answer the above questions. As far as we know this is the first work that investigates such questions when the measure-valued coefficients in the equation are chosen so that solutions are of locally bounded variation.

In 1909 Picone \cite{Picone1910} discovered his famous identity that simplified the proof of Sturm's comparison theorem for two equations of the form \eqref{trpz8104}. Another essential improvement is due Leighton \cite{MR0140759} who pointed out that it suffices to have an integral condition instead of the Sturm's pointwise condition. Mingarelli \cite{MR706255} proved a Leighton-type comparison theorem when only the potential $q$ in Equation \eqref{trpz8104} is a locally finite real Borel measure. Also Ghatasheh and Weikard \cite{MR3624546} provided a Leighton-type comparison theorem that covers Sturm-Liouville equations with distributional potentials. All these works rely on the absolute continuity of solutions which is not valid for equations of the form \eqref{sgbzpie9573}. There are excellent references concerned with comparison theorems and their historical survey, we only mention Hinton \cite{zbMATH02247495}, Mingarelli \cite{MR706255}, Swanson \cite{zbMATH03303390}, and Zettl \cite{MR2170950} and the references therein.

The first to consider differential equations with measure-valued coefficients were Krein \cite{MR0054078} and Feller \cite{MR63607}.
The interest in Sturm-Liouville differential equations with measure-valued coefficients increased recently. Perhaps the wish to have a unified approach for the discrete and continuous cases motivated both Atkinson \cite{MR0176141} and Mingarelli \cite{MR706255}. The basic assumptions in Chapter 3 of \cite{MR706255} require the generating functions for the considered measures to be continuous at boundary points. Recently Eckhardt and Teschl \cite{MR3095152} relaxed Mingarelli's basic assumptions by assuming that the measures do not jump at the same time. More recently Ghatasheh and Weikard \cite{GW18-1} have an essential improvement by allowing measures to jump at the same time by considering balanced solutions, they are functions of locally bounded variations whose values are the average of left- and right-hand limits. There are many other papers considering measure-valued coefficients, see for example Bennewitz \cite{MR1004432}, Persson \cite{MR961581}, and Volkmer \cite{MR2135259} and the references therein.

Differential equations with measure-valued coefficients are important for many reasons. First of all, they are a unification for both the discrete and the continuous cases. If we choose $d\alpha$ and $d\beta$ to be discrete measures in \eqref{sgbzpie9573}, then we obtain homogeneous difference equations. On the other hand, if we choose $\alpha$ and $\beta$ to be antiderivatives of $1/p$ and $q$, respectively, then we obtain Equation \eqref{trpz8104}. Also, differential equations with measure-valued coefficients cover important physical models such as $\delta$ and $\delta'$ interactions; for interested readers we mention Gesztesy and Holden \cite{MR914699}, Albeverio et al. \cite{MR926273}, and Eckhardt et al. \cite{MR3200376}.

In Section \ref{pr0pr} we introduce the definitions and notations that we use in this paper. In Section \ref{eus0eus} we discuss briefly the meaning of Sturm-Liouville differential equations with measure-valued coefficients as well as the associated existence and uniqueness theorems. In Section \ref{scps0scps} we investigate the behavior of nontrivial solutions $u$ of Equation \eqref{sgbzpie9573} at points in $\mathcal{Z}_{\iOmega}(u)$. In Section \ref{spt0spt} we prove a Sturm-type separation theorem for linearly independent solutions of Equation \eqref{sgbzpie9573}. Finally, in section \ref{sct0sct}, we prove a Sturm-type comparison theorem for only balanced solutions of two equations of the form \eqref{sgbzpie9573} with the same $d\alpha$ and two distinct potentials, $d\beta_1$ and $d\beta_2$.

\section{preliminaries}\label{pr0pr}

We denote the space of all complex-valued functions that are of locally bounded variation on $\iOmega$ by $\BVl(\iOmega,\mathbb{C})$. The subset of $\BVl(\iOmega,\mathbb{C})$ consisting of real-valued functions is denoted by $\BVl(\iOmega,\mathbb{R})$. The corresponding left- and right-continuous representatives of a function $f\in\BVl(\iOmega,\mathbb{C})$ are defined by setting $f^-(x)=\lim_{t\uparrow x}f(t)$ and $f^+(x)=\lim_{t\downarrow x}f(t)$, respectively. Given $r\in\mathbb{C}$, we say that a function $f$ is $r$-balanced if $f\in\BVl(\iOmega,\mathbb{C})$ and $f(x)=rf^-(x)+(1-r)f^+(x)$ for all $x\in\iOmega$. We denote the space of all $r$-balanced complex-valued functions on $\iOmega$ by $\BVl^{r}(\iOmega,\mathbb{C})$ and the subspace consisting of those that are real-valued by $\BVl^{r}(\iOmega,\mathbb{R})$. When $r=1/2$ we simply say balanced instead of $1/2$-balanced. These definitions and notations can be extended to matrix-valued functions. For example we denote by $\BVl^r(\iOmega,\mathbb{C}^{m\times n})$ the space of all functions $f:\iOmega\rightarrow\mathbb{C}^{m\times n}$ for which each component of $f$ is $r$-balanced.

The locally finite complex Borel measure generated by $f\in\BVl(\iOmega,\mathbb{C})$ is denoted by $df$. We denote the measure of a singleton $\{s\}$ (i.e., $df(\{s\})$) by $\Delta_{df}(s)$. Note that $\Delta_{df}(s)=f^+(s)-f^-(s)$. We denote the total variation of $df$ by $df_{\uparrow}$. Given $h\in L^1_{\loc}\big(\iOmega,df_{\uparrow})$, the locally finite complex Borel measure $hdf$ is defined to be $dh_0$ where
$$h_0(x)=\begin{cases}
      \;\;\;\int_{[c,x)}h\;df, & x\geq c\\
      -\int_{[x,c)}h\;df, & x<c.
   \end{cases}$$

If $f,g\in\BVl(\iOmega,\mathbb{C})$ satisfy $df\ll dg$ (i.e., $df$ is locally absolutely continuous with respect to $dg_{\uparrow}$), then the Radon-Nikodym derivative of $df$ with respect to $dg$ is denoted by $df/dg$. It is uniquely defined in
$L^1_{\loc}(\iOmega,dg_{\uparrow})$.

The product rule for any $f,g\in\BVl(\iOmega,\mathbb{C})$ (see Hewitt and Stromberg \cite{MR0188387}, Theorem 21.67 and Remark 21.68)) is given by
$$d(fg)=f^-\;dg+g^+\;df=f^+\;dg+g^-\;df.$$
If $f$ and $g$ are balanced functions, then
\begin{equation}\label{erybs0341}
d(fg)=f\;dg+g\;df.
\end{equation}

The support of the locally finite complex Borel measure $df$ can be defined by setting $x\not\in\text{supp}(df)$ if there exists $\epsilon>0$ such that $[x-\epsilon,x+\epsilon]\subset\iOmega$ and $(x-\epsilon,x+\epsilon)$ is a $df$-null set (i.e., any Borel subset of $(x-\epsilon,x+\epsilon)$ has $df$-measure zero).

\section{existence and uniqueness of solutions}\label{eus0eus}

We begin by investigating initial value problems of the form
\begin{equation}\label{A21212001}
-d\Big(\frac{du}{d\alpha}\Big)+u\;d\beta=d\eta,\;\;u(x_0)=A,\;\;\frac{du}{d\alpha}(x_0)=B,
\end{equation}
where $\alpha,\beta,\eta\in\BVl(\iOmega,\mathbb{C})$. We seek solutions to this equation among functions of locally bounded variation, since for any such function $u$ the terms $ud\beta$ and $d\eta$ are locally finite complex Borel measures on $\iOmega$ and if, in addition, $du$ is absolutely continuous with respect to $d\alpha$ and the Radon-Nikodym derivative $du/d\alpha$ has at least one representative of locally bounded variation, then $d(du/d\alpha)$ is a locally finite complex Borel measure on $\iOmega$ so that it makes sense to pose the equation.

The linear system corresponding to the initial value problem \eqref{A21212001} is given by
\begin{equation}\label{A12011}
dv=d\alpha_0\;v+ d\eta_0,\;v(x_0)=A_0
\end{equation}
where
$$v=\left( {\begin{array}{cc}
   u\\
   \frac{du}{d\alpha}\\
  \end{array} } \right),\;
\alpha_0=\left( {\begin{array}{cc}
   0 & \alpha\\
   \beta & 0\\
  \end{array} } \right),\;
  \eta_0=\left( {\begin{array}{cc}
   0\\
   \eta\\
  \end{array} } \right),\;\text{and }
  A_0=\left( {\begin{array}{cc}
   A\\
   B\\
  \end{array} } \right).$$

Existence and uniqueness theorems of left- and right-continuous solution of locally bounded variation (the cases $r=1$ and $r=0$, respectively) for such linear systems were discussed in many papers, see for example Theorem 1.1 in Bennewitz \cite{MR1004432} or Theorem 3.1 in Eckhardt and Teschl \cite{MR3095152}. For the existence and uniqueness of balanced solutions (the case $r=1/2$) see Theorem 2.2 in Ghatasheh and Weikard \cite{GW18-1}. The following theorem provides necessary conditions for existence and uniqueness of $r$-balanced solutions. The proof is omitted since it is similar to the proof of Theorem 2.2 in \cite{GW18-1}. We denote the identity $n\times n$ matrix by $\id$ and the determinant of a square matrix $C$ by $\det(C)$.
\begin{thm}\label{th01} Let $r\in\mathbb{C}$, $\phi\in\BVl(\iOmega,\mathbb{C}^{n\times n})$, and $\psi\in\BVl(\iOmega,\mathbb{C}^{n})$.
If $\det\big(\id-(1-r)\Delta_{d\phi}(x)\big)\neq0\;
\text{and}\;\det\big(\id+r\Delta_{d\phi}(x)\big)\neq0\;\text{for all}\;x\in\iOmega$,
then any initial value problem of the form
$$dy=d\phi\;y+d\psi,\;\;y(x_0)=y_0,$$
where $x_0\in\iOmega$ and $y_0\in\mathbb{C}^n$, has a unique solution in $\BVl^r(\iOmega,\mathbb{C}^{n})$. If, in addition,
$r$, $\phi$, $\psi$, and $y_0$ are real, then the solution is real.
\end{thm}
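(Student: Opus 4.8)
The plan is to read the measure equation $dy=d\phi\,y+d\psi$ as an equality of locally finite measures in which the integrand against $d\phi$ is the $r$-balanced representative of $y$ itself, together with the normalization $y(x_0)=y_0$. On any set carrying no atoms of $d\phi$ or $d\psi$ this is an ordinary (Volterra) integral equation, so the whole difficulty is concentrated at the atoms. Equating the masses of a singleton $\{s\}$ and using $y(s)=ry^-(s)+(1-r)y^+(s)$ gives $y^+(s)-y^-(s)=\Delta_{d\phi}(s)\big(ry^-(s)+(1-r)y^+(s)\big)+\Delta_{d\psi}(s)$, that is,
\[
\big(\id-(1-r)\Delta_{d\phi}(s)\big)\,y^+(s)=\big(\id+r\Delta_{d\phi}(s)\big)\,y^-(s)+\Delta_{d\psi}(s).
\]
This one identity is exactly where the two hypotheses are used: invertibility of $\id-(1-r)\Delta_{d\phi}(s)$ lets one solve for $y^+(s)$ in terms of $y^-(s)$ (propagating across the atom from the left), while invertibility of $\id+r\Delta_{d\phi}(s)$ lets one solve for $y^-(s)$ in terms of $y^+(s)$ (propagating from the right). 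For $r=0$ these reduce to the single condition $\det(\id-\Delta_{d\phi})\neq0$ of the already-known right-continuous theory.

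The cleanest route I would take is to reduce to that right-continuous case, for which existence and uniqueness are available from Bennewitz \cite{MR1004432} and Eckhardt and Teschl \cite{MR3095152}. Define modified coefficients $\tilde\phi\in\BVl(\iOmega,\mathbb{C}^{n\times n})$ and $\tilde\psi\in\BVl(\iOmega,\mathbb{C}^{n})$ by keeping the continuous parts of $d\phi,d\psi$ and prescribing the atoms through
\[
\id-\Delta_{d\tilde\phi}(s)=\big(\id+r\Delta_{d\phi}(s)\big)^{-1}\big(\id-(1-r)\Delta_{d\phi}(s)\big),\qquad \Delta_{d\tilde\psi}(s)=\big(\id+r\Delta_{d\phi}(s)\big)^{-1}\Delta_{d\psi}(s).
\]
Both hypotheses guarantee that these are well defined and that $\det(\id-\Delta_{d\tilde\phi}(s))\neq0$, so the right-continuous theory produces a unique right-continuous $w$ solving $dw=d\tilde\phi\,w+d\tilde\psi$. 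Setting $y:=rw^-+(1-r)w^+$ yields an $r$-balanced function whose left and right limits coincide with those of $w$; substituting the displayed formulas into the right-continuous jump rule $w^+(s)=(\id-\Delta_{d\tilde\phi}(s))^{-1}(w^-(s)+\Delta_{d\tilde\psi}(s))$ recovers precisely the balanced jump identity above, and on the continuous parts the two equations coincide. Fixing the normalization of $w$ so that $ry^-(x_0)+(1-r)y^+(x_0)=y_0$ gives the initial condition, and uniqueness transfers in the other direction, since the left and right limits of any $r$-balanced solution satisfy the right-continuous problem for $(\tilde\phi,\tilde\psi)$. (Alternatively one can run successive approximations directly, using the solved jump rule to cross atoms, as in \cite{GW18-1}.)

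The main obstacle is the behaviour at atoms, in two respects. First, unlike the smooth case the jump rule is implicit in $y^+(s)$, so without the determinant hypotheses it may fail to have a unique solution; the conditions are therefore essential rather than technical, and they are symmetric precisely because an $r$-balanced solution must be recoverable from data on either side. Second, one must check that the modified $\tilde\phi,\tilde\psi$ still have locally finite variation even when atoms accumulate; this follows because the transformation is the identity to first order, $\id-\Delta_{d\tilde\phi}(s)=\id-\Delta_{d\phi}(s)+O(|\Delta_{d\phi}(s)|^2)$, so $\sum_s|\Delta_{d\tilde\phi}(s)|$ and $\sum_s|\Delta_{d\tilde\psi}(s)|$ converge on compact subintervals together with the original sums, while only finitely many atoms are large. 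Finally, reality is immediate: when $r,\phi,\psi,y_0$ are real every matrix inverse and integral above stays real, so $w$ and hence $y$ are real, and by uniqueness this real function is the solution.
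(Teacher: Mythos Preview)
Your proposal is correct and takes a genuinely different route from the one the paper points to. The paper does not give a self-contained argument here but defers to Theorem~2.2 of \cite{GW18-1}, whose proof (for $r=1/2$) runs a direct Picard/successive-approximation scheme adapted to the balanced convention, crossing atoms by means of the solved jump rule you wrote down. You instead \emph{reduce} to the already established right-continuous theory by absorbing the $r$-dependence into modified coefficients $\tilde\phi,\tilde\psi$. This is a clean manoeuvre: it avoids redoing any iteration and makes transparent why precisely two determinant hypotheses are needed---one to define $\tilde\phi,\tilde\psi$, the other to guarantee $\det(\id-\Delta_{d\tilde\phi})\neq0$ so that the $r=0$ theory of \cite{MR1004432,MR3095152} applies. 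The price is the bookkeeping you flagged (checking $\tilde\phi,\tilde\psi\in\BVl$ via the first-order expansion of the jump transformation), together with one small point you left implicit: when $x_0$ is itself an atom you must verify that the affine map $w^+(x_0)\mapsto r\,w^-(x_0)+(1-r)\,w^+(x_0)$ is a bijection, so that the condition $y(x_0)=y_0$ pins down a unique right-continuous initial value for $w$. A short computation gives $\id-r\,\Delta_{d\tilde\phi}(x_0)=(\id+r\,\Delta_{d\phi}(x_0))^{-1}$, so this is automatic under the standing hypotheses and your argument goes through.
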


According to Theorem \ref{th01} if $(1-r)^2\Delta_{d\alpha}(x)\Delta_{d\beta}(x)\neq1$ and $r^2\Delta_{d\alpha}(x)\Delta_{d\beta}(x)\neq1$ for all $x\in\iOmega$, then system \eqref{A12011} has a unique solution $v\in\BVl^r(\iOmega,\mathbb{C}^{2})$. This means the components of $v$ satisfy $dv_1=v_2d\alpha$ and $dv_2=v_1d\beta+d\eta$. The first equation is equivalent to $v_2=dv_1/d\alpha$ $d\alpha$-a.e. on $\iOmega$. When we specialize to the initial value problem \eqref{A21212001} we want $v_2=dv_1/d\alpha$ everywhere on $\iOmega$. The following theorem shows that this can be achieved by assuming that the support of $d\alpha$ is all of $\iOmega$.
\begin{thm}\label{A1}
Let $\alpha\in\BVl(\iOmega,\mathbb{C})$ with $\text{supp}(d\alpha)=\iOmega$. If $f$ is an $r$-balanced function that satisfies $f=0$, $d\alpha$-a.e. on $\iOmega$, then $f$ is identically zero on $\iOmega$.
\end{thm}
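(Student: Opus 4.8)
The plan is to prove that $f^-(x)=0$ and $f^+(x)=0$ for every $x\in\iOmega$; since $f$ is $r$-balanced, the identity $f(x)=rf^-(x)+(1-r)f^+(x)$ then forces $f(x)=0$ for every $x$. Because $f\in\BVl(\iOmega,\mathbb{C})$, the one-sided limits $f^-(x)$ and $f^+(x)$ exist at every point, so the whole argument reduces to producing, on each side of a given point, a sequence along which $f$ vanishes. First I would record the measure-theoretic content of the hypothesis: saying $f=0$ $d\alpha$-a.e.\ is exactly the statement that the set $N=\{x\in\iOmega:f(x)\neq0\}$ is a $d\alpha$-null set, that is, $d\alpha_{\uparrow}(N)=0$.

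The key step is to deduce from $\supp(d\alpha)=\iOmega$ that $N$ contains no nonempty open subinterval of $\iOmega$, equivalently that $\{x\in\iOmega:f(x)=0\}$ is dense in $\iOmega$. Indeed, if some open interval $(c,d)\subset\iOmega$ were contained in $N$, I would choose $c<c'<d'<d$; then $[c',d']\subset\iOmega$ and the interval $(c',d')\subset N$ is $d\alpha$-null, so by the definition of support its midpoint would fail to lie in $\supp(d\alpha)$, contradicting $\supp(d\alpha)=\iOmega$. Hence no such interval exists and $\{f=0\}$ meets every nonempty open subinterval of $\iOmega$.

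Finally, fixing $x_0\in\iOmega$ and using the density just established, I would select sequences $t_n\downarrow x_0$ and $s_n\uparrow x_0$ with $f(t_n)=f(s_n)=0$. Since the one-sided limits exist, $f^+(x_0)=\lim_n f(t_n)=0$ and $f^-(x_0)=\lim_n f(s_n)=0$, and therefore $f(x_0)=rf^-(x_0)+(1-r)f^+(x_0)=0$. The only point that I expect to require care is the passage from the support hypothesis to density, namely shrinking $(c,d)$ to a closed subinterval of $\iOmega$ so that the support definition applies cleanly at an interior point; once density is in hand, the conclusion is immediate from the $r$-balanced structure and the existence of the one-sided limits.
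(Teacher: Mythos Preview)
Your proof is correct and rests on exactly the same idea as the paper's: the support hypothesis forbids $\{f\neq 0\}$ from containing a nonempty open subinterval, and combining this with the existence of one-sided limits and the $r$-balanced identity forces $f\equiv 0$. The only difference is packaging: the paper argues by contradiction (if $f(x_1)\neq 0$ then some one-sided limit, say $f^-(x_1)$, is nonzero, so $f\neq 0$ on a left neighborhood $(x_0,x_1)$, which is then $d\alpha$-null, contradicting $\supp(d\alpha)=\iOmega$), whereas you first establish density of $\{f=0\}$ and then pass to limits on both sides; this is the same argument read in the contrapositive direction.
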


\begin{proof}
Assume to the contrary that $f(x_1)\neq0$ for some $x_1\in\iOmega$. Since $f$ is $r$-balanced, $f^-(x_1)\neq0$ or $f^+(x_1)\neq0$. For simplicity we only consider the case $f^-(x_1)\neq0$. Since $f^-$ is left-continuous on $\iOmega$, there exists $x_0\in(a,x_1)$ such that $f(x)\neq0$ on $(x_0,x_1)$. This implies that $(x_0,x_1)$ is a $df$-null set. This contradiction completes the proof.
\end{proof}

The minimal set of assumptions that we have to make to assure the existence and uniqueness of $r$-balanced solutions of initial value problems of the form \eqref{A21212001} are presented in the following hypothesis. For any $z\in\mathbb{C}$, we define $\theta_z(x)=1-z^2\Delta_{d\alpha}(x)\Delta_{d\beta}(x)$ for all $x\in\iOmega$.

\begin{hyp}\label{sop097ts}
$r$ is a fixed complex number, $\alpha$ and $\beta$ belong to $\BVl(\iOmega,\mathbb{C})$, the support of $d\alpha$ is $\iOmega$, and $\theta_{1-r}(x)\neq0$ and $\theta_{r}(x)\neq0$ for all $x\in\iOmega$.
\end{hyp}

\begin{defn}
Assume that $\eta\in\BVl(\iOmega,\mathbb{C})$ and $r$, $\alpha$, and $\beta$ satisfy Hypothesis \ref{sop097ts}. $u$ is said to be an $r$-balanced solution of the differential equation $-d(dy/d\alpha)+yd\beta=d\eta$ on $\iOmega$ if $u$ is $r$-balanced on $\iOmega$, $du\ll d\alpha$, $du/d\alpha$ is $r$-balanced on $\iOmega$, and $-d(du/d\alpha)+ud\beta=d\eta$. $u$ is called balanced solution if $r=1/2$.
\end{defn}

Now we are ready to state the existence and uniqueness theorem for initial value problems of the form \eqref{A21212001}.

\begin{thm}\label{th00121}
Assume that $\eta\in\BVl(\iOmega,\mathbb{C})$ and $r$, $\alpha$, and $\beta$ satisfy Hypothesis \ref{sop097ts}. Then any initial value problem of the form
\begin{equation}
-d\Big(\frac{dy}{d\alpha}\Big)+y\;d\beta=d\eta,\;\;y(x_0)=A,\;\;\frac{dy}{d\alpha}(x_0)=B,
\end{equation}
where $A,B\in\mathbb{C}$ and $x_0\in\iOmega$, has a unique $r$-balanced solution. If, in addition, $r$, $\alpha$, $\beta$, $\eta$, $A$ and $B$ are real, then the solution is real.
\end{thm}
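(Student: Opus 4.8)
The plan is to recast the scalar initial value problem \eqref{A21212001} as the companion first-order linear system \eqref{A12011}, solve that system by Theorem \ref{th01}, and then use Theorem \ref{A1} to upgrade the $d\alpha$-almost-everywhere information about $du/d\alpha$ to pointwise information. Concretely, I would apply Theorem \ref{th01} with $\phi=\alpha_0$, $\psi=\eta_0$, and $y_0=A_0$. Since
$$\Delta_{d\phi}(x)=\begin{pmatrix}0&\Delta_{d\alpha}(x)\\ \Delta_{d\beta}(x)&0\end{pmatrix},$$
a direct computation gives $\det\big(\id-(1-r)\Delta_{d\phi}(x)\big)=1-(1-r)^2\Delta_{d\alpha}(x)\Delta_{d\beta}(x)=\theta_{1-r}(x)$ and $\det\big(\id+r\Delta_{d\phi}(x)\big)=1-r^2\Delta_{d\alpha}(x)\Delta_{d\beta}(x)=\theta_r(x)$. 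By Hypothesis \ref{sop097ts} both determinants are nonzero for every $x\in\iOmega$, so Theorem \ref{th01} produces a unique $v=(v_1,v_2)\in\BVl^r(\iOmega,\mathbb{C}^2)$ solving \eqref{A12011}, and this $v$ is real whenever the data are real. Its components satisfy $dv_1=v_2\,d\alpha$ and $-dv_2+v_1\,d\beta=d\eta$.

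For existence I would put $u=v_1$. Then $u$ is $r$-balanced and $du=v_2\,d\alpha$, so $du\ll d\alpha$ and $v_2$ represents the Radon-Nikodym derivative $du/d\alpha$. To meet the definition of an $r$-balanced solution I must know that $du/d\alpha$ itself is $r$-balanced, and here Theorem \ref{A1} is decisive: any two $r$-balanced functions that agree $d\alpha$-a.e. agree everywhere (apply Theorem \ref{A1} to their difference, which is again $r$-balanced and vanishes $d\alpha$-a.e.), so $v_2$ is the unique $r$-balanced representative of $du/d\alpha$ and we may write $du/d\alpha=v_2$ at every point of $\iOmega$. The second scalar relation then reads $-d(du/d\alpha)+u\,d\beta=d\eta$, while the initial data are recovered from $u(x_0)=v_1(x_0)=A$ and $(du/d\alpha)(x_0)=v_2(x_0)=B$. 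Thus $u$ is an $r$-balanced solution, and it is real when the data are real because then $v$ is real.

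For uniqueness, given any $r$-balanced solution $u$ of the initial value problem I would set $v_1=u$ and let $v_2$ be the $r$-balanced function $du/d\alpha$. Since $du\ll d\alpha$ we have $dv_1=du=(du/d\alpha)\,d\alpha=v_2\,d\alpha$, and the differential equation rearranges to $-dv_2+v_1\,d\beta=d\eta$; together with $v_1(x_0)=A$ and $v_2(x_0)=B$ this exhibits $(v_1,v_2)$ as a solution of \eqref{A12011} with initial vector $A_0$. The uniqueness half of Theorem \ref{th01} forces $(v_1,v_2)$ to coincide with the pair constructed above, whence $u=v_1$ is uniquely determined.

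The one genuine subtlety I anticipate is the step from the $d\alpha$-a.e. identity $v_2=du/d\alpha$ delivered by the linear system to the pointwise identity needed both to interpret $-d(du/d\alpha)+u\,d\beta=d\eta$ with $du/d\alpha$ an $r$-balanced function and to make sense of the initial condition $(du/d\alpha)(x_0)=B$. This is precisely the gap bridged by Theorem \ref{A1}, and it is the reason the hypothesis $\supp(d\alpha)=\iOmega$ cannot be dispensed with; the remainder is a routine translation between the scalar equation and its first-order companion.
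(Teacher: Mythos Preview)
Your proposal is correct and follows exactly the route the paper lays out in the discussion preceding Theorem~\ref{th00121}: reduce to the companion system~\eqref{A12011}, invoke Theorem~\ref{th01} (your determinant computation $\det(\id\mp z\Delta_{d\alpha_0})=\theta_z$ is precisely what the paper records just after Theorem~\ref{th01}), and then use Theorem~\ref{A1} to pass from the $d\alpha$-a.e.\ identity $v_2=dv_1/d\alpha$ to the pointwise one. The paper does not write out a formal proof of Theorem~\ref{th00121}, treating it as evident from this discussion, and your argument fills in those details faithfully, including the uniqueness direction.
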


We close this section by deriving two equations that will be used extensively in the coming sections. We begin by integrating Equation \eqref{A12011} at each singleton, when $d\eta=0$. We obtain $v^+-v^-=\Delta_{d\alpha_0}v$. Also since $v$ is $r$-balanced we have $rv^-+(1-r)v^+=v$. By eliminating $v^-$ we obtain
\begin{equation}\label{we7610s200}
\left( {\begin{array}{cc}
   u^+\\
   (\frac{du}{d\alpha})^+\\
  \end{array} } \right)=\left( {\begin{array}{cc}
   1 & r\Delta_{d\alpha}\\
   r\Delta_{d\beta} & 1\\
  \end{array} } \right)\left( {\begin{array}{cc}
  u\\
   \frac{du}{d\alpha}\\
  \end{array} } \right)
\end{equation}
and by eliminating $v^+$ we obtain
\begin{equation}\label{wg9sur0165}
\left( {\begin{array}{cc}
u^-\\
(\frac{du}{d\alpha})^-\\
\end{array} } \right)=\left( {\begin{array}{cc}
1 & -(1-r)\Delta_{d\alpha}\\
-(1-r)\Delta_{d\beta} & 1\\
\end{array} } \right)\left( {\begin{array}{cc}
u\\
\frac{du}{d\alpha}\\
\end{array} } \right).
\end{equation}
Also observe that in each one of these equations we can solve for
$$\left( {\begin{array}{cc}
  u\\
   \frac{du}{d\alpha}\\
  \end{array} } \right)$$
since the hypothesis in Theorem \ref{th00121} assures the invertibility of each of the $2\times 2$ matrices in Equations \eqref{we7610s200} and \eqref{wg9sur0165}.

\section{sign-changing points of solutions}\label{scps0scps}

When $q$ and $1/p$ are locally integrable on $\iOmega$ solutions of the differential equation $-(pu')'+qu=0$ are locally absolutely continuous on $\iOmega$. If $p>0$ and $q$ is real-valued on $\iOmega$, then zeros of nontrivial real-valued solutions are isolated in $\iOmega$. Moreover, every nontrivial solution changes sign at each of its zeros. The situation is more complicated for the differential equation $-d(du/d\alpha)+ud\beta=0$ since solutions are functions of locally bounded variation and that includes the possibility of having a countable number of discontinuity points in $\iOmega$. This section is devoted to develop a theory for the behavior of nontrivial solutions $u$ at points $s\in\iOmega$ that satisfy $u^-(s)u^+(s)\leq0$.

Let $f\in\BVl(\iOmega,\mathbb{R})$ and let $I$ be a subinterval of $\iOmega$. We denote by $\mathcal{Z}_{I}(f)$ the set of all $x\in I$ such that $f^-(x)f^+(x)\leq0$.

\begin{thm}[Intermediate Value Theorem]\label{ss3}
Let $f\in\BVl(\iOmega,\mathbb{R})$. If $I$ is a subinterval of $\iOmega$ such that $\mathcal{Z}_{I}(f)$ is empty, then either $f^-,f^+>0$ on $I$ or $f^-,f^+<0$ on $I$. Furthermore, $f^-$ and $f^+$ are bounded away from zero on $I$ if $I$ is compact.
\end{thm}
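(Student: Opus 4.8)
The plan is to turn the hypothesis into a local statement about signs and then invoke the connectedness of the interval $I$. Since $\mathcal{Z}_{I}(f)$ is empty, every $x\in I$ satisfies $f^-(x)f^+(x)>0$; in particular $f^-(x)$ and $f^+(x)$ are both nonzero and share a common sign. Accordingly I would split $I$ into the disjoint sets $I^+=\{x\in I: f^-(x)>0,\ f^+(x)>0\}$ and $I^-=\{x\in I: f^-(x)<0,\ f^+(x)<0\}$, so that $I=I^+\cup I^-$. The dichotomy in the statement is precisely the assertion that one of these two sets is empty, and the natural way to obtain it is to show that each is relatively open in $I$.

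The key step is this openness, where I would combine the one-sided continuity of the representatives with the hypothesis. Recall that $f^+$ is right-continuous and $f^-$ is left-continuous, and that for a function of locally bounded variation one also has the cross relations $\lim_{t\uparrow x}f^+(t)=f^-(x)$ and $\lim_{t\downarrow x}f^-(t)=f^+(x)$. Fix $x_0\in I^+$. Right-continuity of $f^+$ together with $f^+(x_0)>0$ produce $\delta_1>0$ with $f^+>0$ on $[x_0,x_0+\delta_1)\cap I$; for $x$ in the open part $(x_0,x_0+\delta_1)$ the hypothesis $f^-(x)f^+(x)>0$ together with $f^+(x)>0$ forces $f^-(x)>0$. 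Symmetrically, left-continuity of $f^-$ and $f^-(x_0)>0$ give $\delta_2>0$ with $f^->0$ on $(x_0-\delta_2,x_0]\cap I$, and the hypothesis then yields $f^+>0$ there. Hence $f^-$ and $f^+$ are both positive on a full neighborhood of $x_0$ in $I$, so $I^+$ is relatively open; the same argument applies to $I^-$. Since $I$ is an interval and therefore connected, the partition $I=I^+\cup I^-$ into two disjoint relatively open sets forces one of them to be empty, which is the desired dichotomy.

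For the last assertion, assume $I$ is compact and, without loss of generality, that $f^-,f^+>0$ on $I$. I would argue by contradiction: if $\inf_{x\in I}\min\{f^-(x),f^+(x)\}=0$, choose $x_n\in I$ with $\min\{f^-(x_n),f^+(x_n)\}\to0$. Compactness lets me pass to a subsequence converging monotonically to some $x^*\in I$ (a constant subsequence is impossible since $f^-(x^*),f^+(x^*)>0$). If $x_n\uparrow x^*$, then $f^-(x_n)\to f^-(x^*)$ by left-continuity and $f^+(x_n)\to f^-(x^*)$ by the cross relation, so the minimum tends to $f^-(x^*)>0$; if $x_n\downarrow x^*$, the minimum tends to $f^+(x^*)>0$ by the symmetric relations. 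Either way this contradicts the choice of $x_n$, so the infimum is positive and $f^-,f^+$ are bounded away from zero.

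I expect the main obstacle to be the openness step, specifically the bookkeeping needed to pass from the one-sided continuity of a single representative (which only controls one of $f^-,f^+$ on a one-sided neighborhood) to control of both on a two-sided neighborhood. The hypothesis $f^-f^+>0$ is exactly what bridges this gap, and verifying the cross relations $\lim_{t\uparrow x}f^+(t)=f^-(x)$ and $\lim_{t\downarrow x}f^-(t)=f^+(x)$ — standard for regulated functions but easy to take for granted — is the supporting technical point on which both the openness argument and the compactness argument rely.
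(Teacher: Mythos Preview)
Your argument is correct, and it differs from the paper's in an interesting way. The paper proves the dichotomy by the classical least-upper-bound method: assuming $f^-(s)>0$ and $f^-(t)<0$ for some $s<t$ in $I$, it sets $X=\{x\in[s,t]:f^->0\text{ on }[s,x]\}$, takes $x_0=\sup X$, and derives a contradiction by combining left-continuity of $f^-$ with the hypothesis to push positivity past $x_0$. Your route instead partitions $I=I^+\cup I^-$, proves each piece relatively open via the one-sided continuity together with the hypothesis $f^-f^+>0$, and invokes connectedness. The connectedness argument is arguably cleaner and highlights the topological content of the result; the paper's supremum argument is more hands-on and avoids mentioning connectedness explicitly, though of course the two are equivalent at bottom. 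For the compact part, the paper simply asserts that the infimum of $f^-$ on a compact subinterval is realized as $f^\mp(z)$ for some $z$ in the interval, whereas you spell out the sequential-compactness argument and explicitly use the cross relations $\lim_{t\uparrow x}f^+(t)=f^-(x)$ and $\lim_{t\downarrow x}f^-(t)=f^+(x)$. Your version is more self-contained; the paper's is terser but leaves that verification to the reader.
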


\begin{proof}
Since $\mathcal{Z}_{I}(f)$ is empty, it is easy to check that $f^-<0$ on $I$ if and only if $f^+<0$ on I. Also $f^->0$ on I if and only if $f^+>0$ on $I$. To prove the first part of the theorem, assume to the contrary that there are $s,t$ in $I$ such that $f^-(s)f^-(t)\leq0$. We may exclude the cases $f^-(s)f^-(t)=0$ and $s=t$ since they lead to a direct contradiction. Let us assume that $s<t$ in $I$, $f^-(s)>0$, and $f^-(t)<0$. Then $X=\{x\in[s,t]:f^->0\;\text{on}\;[s,x]\}$ is a nonempty subset of $\iOmega$ that is bounded above by $t$. Since $f^-$ is left-continuous and $\mathcal{Z}_{I}(f)$ is empty, the least upper bound of $X$, say $x_0$, satisfies $f^-(x_0)>0$. Consequently $f^+(x_0)>0$ and so for some $\delta, M>0$, $f^+(x)\geq M$ on $(x_0,x_0+\delta)$. Then $f^-(x)\geq M$ on $(x_0,x_0+\delta)$ which contradicts $x_0$ is the least upper bound of $X$. Since a similar argument works when $f^-(s)<0$ and $f^-(t)>0$ the proof of the first part is complete.

The second part of the theorem follows from the fact that the least upper bound of $f^-$ on a compact subinterval $J$ of $\iOmega$ is attained at a point of the form $f^{\mp}(z)$, where $z\in J$. The same statement holds for the greatest lower bound.
\end{proof}

If $f\in\BVl(\iOmega,\mathbb{R})$ such that $\mathcal{Z}_{\iOmega}(f)$ is empty, then the Intermediate Value Theorem \ref{ss3} tells us that for each $r\in[0,1]$, either $rf^-+(1-r)f^+>0$ on $\iOmega$ or $rf^-+(1-r)f^+<0$ on $\iOmega$. Furthermore it is bounded away from zero on each compact subinterval of $\iOmega$.

\begin{cor}\label{es54uaw}
Let $f\in\BVl(\iOmega,\mathbb{R})$. If $y_0$ is a number between $f^-(s)$ and $f^-(t)$, exclusive, for some interval $[s,t]\subset\iOmega$, then $y_0$ is between $f^-(x_0)$ and $f^+(x_0)$, inclusive, for some $x_0\in[s,t]$.
\end{cor}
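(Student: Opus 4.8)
The plan is to reduce the statement to the Intermediate Value Theorem \ref{ss3} by translating $f$ so that the target level $y_0$ becomes the reference level zero. Concretely, I would set $g=f-y_0$. Since $y_0$ is a constant and $f\in\BVl(\iOmega,\mathbb{R})$, we have $g\in\BVl(\iOmega,\mathbb{R})$, and translation sends the one-sided representatives to their shifts, $g^\pm=f^\pm-y_0$. Under this translation the hypothesis that $y_0$ lies strictly between $f^-(s)$ and $f^-(t)$ becomes the statement that $g^-(s)$ and $g^-(t)$ are nonzero and of opposite sign, i.e.\ $g^-(s)g^-(t)<0$; this holds no matter which of $f^-(s)$, $f^-(t)$ is the larger.

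Next I would argue by contradiction using the contrapositive of Theorem \ref{ss3} applied to $g$ on the subinterval $I=[s,t]\subset\iOmega$. If $\mathcal{Z}_{[s,t]}(g)$ were empty, then Theorem \ref{ss3} would force $g^-$ to have a single sign on all of $[s,t]$ — either $g^->0$ throughout or $g^-<0$ throughout. This is incompatible with $g^-(s)g^-(t)<0$, so $\mathcal{Z}_{[s,t]}(g)$ must be nonempty.

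Finally, choosing any $x_0\in\mathcal{Z}_{[s,t]}(g)$ yields $g^-(x_0)g^+(x_0)\leq0$, which unwinds to $(f^-(x_0)-y_0)(f^+(x_0)-y_0)\leq0$; this is exactly the assertion that $y_0$ lies between $f^-(x_0)$ and $f^+(x_0)$, inclusive. I do not expect a genuine obstacle here, since the corollary is essentially a restatement of Theorem \ref{ss3}. The only points requiring care are the bookkeeping that translation sends one-sided limits to their shifts and that the word \emph{exclusive} in the hypothesis is precisely what delivers the strict product $g^-(s)g^-(t)<0$ needed to defeat the dichotomy in Theorem \ref{ss3}.
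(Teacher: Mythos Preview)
Your proposal is correct and follows essentially the same approach as the paper: translate by $y_0$ to reduce to Theorem~\ref{ss3}, conclude that $\mathcal{Z}_{[s,t]}(f-y_0)$ is nonempty, and unwind the inequality $(f^-(x_0)-y_0)(f^+(x_0)-y_0)\le 0$. The paper's proof is terser but identical in substance; your added remarks about the contrapositive and the shift of one-sided limits are just the details the paper leaves implicit.
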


\begin{proof}
It follows from the Intermediate Value Theorem \ref{ss3} that $\mathcal{Z}_{[s,t]}(f-y_0)$ is not empty, which means $(f^-(x_0)-y_0)(f^+(x_0)-y_0)\leq0$ for some $x_0\in[s,t]$. This in turn implies that $y_0$ is between $f^-(x_0)$ and $f^+(x_0)$, inclusive.
\end{proof}

A similar version of Corollary \ref{es54uaw} is valid if $y_0$ is between
$f^-(s)$ and $f^+(t)$, $f^+(s)$ and $f^-(t)$, or $f^+(s)$ and $f^+(t)$.

\begin{thm}[Mean Value Theorem]\label{sss05}
Let $r$ be a fixed number in $[0,1]$. Let $\alpha$ be a strictly increasing function on $\iOmega$ and let $u$ be a real-valued $r$-balanced function on $\iOmega$ such that $du\ll d\alpha$ and $du/d\alpha$ is $r$-balanced on $\iOmega$. If $s<t$ are in $\mathcal{Z}_{\iOmega}(u)$, then $\mathcal{Z}_{[s,t]}(du/d\alpha)$ is not empty.
\end{thm}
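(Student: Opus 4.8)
The plan is to prove a Rolle-type statement: between two sign-changing points of $u$ there must be a sign-changing point of $du/d\alpha$. The natural strategy is to argue by contradiction, assuming $\mathcal{Z}_{[s,t]}(du/d\alpha)$ is empty, and then use the Intermediate Value Theorem \ref{ss3} applied to $du/d\alpha$ to conclude that $du/d\alpha$ has a strict sign (say its left- and right-continuous representatives are both positive) on all of $[s,t]$. The goal is then to derive from this a monotonicity statement about $u$ that is incompatible with $u$ having sign-changing points at both endpoints $s$ and $t$.

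First I would note that $s,t\in\mathcal{Z}_{\iOmega}(u)$ means $u^-(s)u^+(s)\le 0$ and $u^-(t)u^+(t)\le 0$, so at each endpoint the balanced value $u$ lies between (or equals) values of opposite sign. Since $u$ is $r$-balanced with $r\in[0,1]$, this pins down the sign behavior of $u$ near $s$ and $t$. The key step is to show that $du/d\alpha>0$ (in the IVT sense) forces $u$ to be strictly increasing across $(s,t)$ in an appropriate sense. Concretely, because $du\ll d\alpha$ with Radon-Nikodym derivative $du/d\alpha$, one has $du = (du/d\alpha)\,d\alpha$, and since $\alpha$ is strictly increasing so that $d\alpha$ is a positive measure, a positive derivative should yield that $u^+$ and $u^-$ are increasing on $[s,t]$. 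I would make this precise by integrating: for $x<y$ in $[s,t]$, $u^+(y)-u^+(x)=\int_{(x,y]}(du/d\alpha)\,d\alpha$ (or the analogous formula for $u^-$ over half-open intervals), and positivity of the integrand against a positive measure gives strict monotonicity once we know the derivative is bounded away from zero, which the second part of Theorem \ref{ss3} supplies on the compact interval $[s,t]$.

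With monotonicity in hand, the contradiction should emerge as follows. If $u$ is strictly increasing across $[s,t]$, then the sign pattern forced at $s$ (where $u$ transitions through zero, so values just right of $s$ are nonnegative in the relevant sense) is incompatible with the sign pattern forced at $t$ (where $u$ again transitions through zero, requiring values just left of $t$ to be nonpositive). More carefully, $s\in\mathcal{Z}_{\iOmega}(u)$ with increasing $u$ should force $u^+(s)\ge 0$ and indeed $u^+>0$ just to the right, while $t\in\mathcal{Z}_{\iOmega}(u)$ forces $u^-(t)\le 0$; but monotonicity gives $u^+(s)\le u^-(t)$, squeezing everything to zero and then contradicting that the derivative is bounded away from zero on $(s,t)$ (which would keep $u$ genuinely growing). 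The symmetric case $du/d\alpha<0$ on $[s,t]$ is handled identically after a sign flip.

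The main obstacle I anticipate is handling the jump points correctly, since $u$ need not be continuous and the relationship between $u$, $u^-$, $u^+$, and the balanced value must be tracked through the discontinuities. The delicate point is that at a sign-changing point the inequality $u^-u^+\le 0$ allows the one-sided limits to have genuinely opposite signs with a jump across zero, so I must argue that strict positivity of $du/d\alpha$ is inconsistent with $u$ jumping \emph{downward} through zero at an endpoint. I expect to use Equations \eqref{we7610s200} and \eqref{wg9sur0165}, which relate $u^\pm$ and $(du/d\alpha)^\pm$ to the balanced values through the jump matrices, to control the sign of the jump $\Delta_{du}(s)=u^+(s)-u^-(s)$ in terms of the sign of $du/d\alpha$ and the positive quantity $\Delta_{d\alpha}(s)$. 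Verifying that these jump relations cooperate with the integral monotonicity argument, rather than allowing a rogue downward jump, is where the real care is needed.
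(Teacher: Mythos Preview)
Your approach is correct and coincides with the paper's: argue by contradiction, apply Theorem~\ref{ss3} to $du/d\alpha$ on the compact interval $[s,t]$ to get (say) strict positivity of its one-sided limits, and then exploit that $du=(du/d\alpha)\,d\alpha$ with $d\alpha$ a positive measure to produce monotonicity inequalities that are incompatible with $s,t\in\mathcal{Z}_{\iOmega}(u)$.

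Two remarks. First, the contradiction closes more quickly than you suggest: since $\alpha$ is strictly increasing, $u^-(t)-u^+(s)=\int_{(s,t)}(du/d\alpha)\,d\alpha>0$ is \emph{strict}, so from $u^+(s)\ge 0\ge u^-(t)$ you get $0\le u^+(s)<u^-(t)\le 0$, an immediate contradiction without needing the ``bounded away from zero'' clause. The paper phrases this slightly differently, integrating over all four interval types $(s,t)$, $(s,t]$, $[s,t)$, $[s,t]$ to obtain $u^-(s),u^+(s)<\min\{u^-(t),u^+(t)\}\le 0$, which directly contradicts $s\in\mathcal{Z}_{\iOmega}(u)$; this avoids the separate step of checking the jump direction at $s$.

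Second, your anticipated obstacle is a non-issue and you do not need Equations~\eqref{we7610s200} and~\eqref{wg9sur0165}. The jump $u^+(s)-u^-(s)=\int_{\{s\}}(du/d\alpha)\,d\alpha\ge 0$ follows directly from the Radon--Nikodym relation, using that $(du/d\alpha)(s)=r(du/d\alpha)^-(s)+(1-r)(du/d\alpha)^+(s)>0$ (a convex combination of positives since $r\in[0,1]$) and $\Delta_{d\alpha}(s)\ge 0$. No rogue downward jump can occur.
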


\begin{proof}
Assume to the contrary that $\mathcal{Z}_{[s,t]}(du/d\alpha)$ is empty. It follows from the Intermediate Value Theorem \ref{ss3} that either $du/d\alpha>0$ on $[s,t]$ or $du/d\alpha<0$ on $[s,t]$. Let us assume that $du/d\alpha>0$ on $[s,t]$. For any interval $I\subset[s,t]$ that contains more than one point
$$\int_I du=\int_I \frac{du}{d\alpha}\;d\alpha>0$$
since $\alpha$ is strictly increasing on $\iOmega$ and $du/d\alpha>0$ on $[s,t]$. By choosing $I$ to be $(s,t)$, $(s,t]$, $[s,t)$, and $[s,t]$ we obtain $u^-(t)-u^+(s)>0$, $u^+(t)-u^+(s)>0$, $u^-(t)-u^-(s)>0$, and $u^+(t)-u^-(s)>0$, respectively. The first two inequalities imply $u^+(s)<\min\{u^-(t),u^+(t)\}$ and the second two inequalities imply $u^-(s)<\min\{u^-(t),u^+(t)\}$. But $\min\{u^-(t),u^+(t)\}\leq0$, so $u^-(s)$ and $u^+(s)$ are both negative and this contradicts the assumption that $s\in\mathcal{Z}_{\iOmega}(u)$. Therefore $\mathcal{Z}_{[s,t]}(du/d\alpha)$ is not empty. Since a similar argument works when $du/d\alpha<0$ on $[s,t]$ the proof is complete.
\end{proof}

We point out that it suffices to assume that the support of $d\alpha$ is $\iOmega$ and $\alpha$ is strictly increasing on some open interval containing $[s,t]$ to have the conclusion of the Mean Value Theorem \ref{sss05}. Furthermore, the theorem can be strengthened given the exact behavior of $u$ at $s$ and $t$. For example if $u^+(s)=u^-(t)=0$, then $\mathcal{Z}_{(s,t)}(du/d\alpha)$ is not empty (note that we now have an open interval $(s,t)$).

In most of the coming results we require the following hypothesis to be satisfied.

\begin{hyp}\label{H:4.1}
$r$ is a fixed number in $[0,1]$, $\alpha$ is strictly increasing on $\iOmega$, $\beta\in\BVl(\iOmega,\mathbb{R})$, and $\theta_{1-r}(x)\neq0$ and $\theta_{r}(x)\neq0$ for all $x\in\iOmega$.
\end{hyp}

Now we are ready to prove the main result of this section.

\begin{thm}\label{10664yrlz}
Assume that $r$, $\alpha$, and $\beta$ satisfy Hypothesis \ref{H:4.1}. If $u$ is a nontrivial real-valued $r$-balanced solution of $-d(dy/d\alpha)+yd\beta=0$ on $\iOmega$, then $\mathcal{Z}_{\iOmega}(u)$ is a set of isolated points.
\end{thm}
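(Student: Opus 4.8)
The plan is to argue by contradiction, showing that an accumulation point of $\mathcal{Z}_{\iOmega}(u)$ inside $\iOmega$ would force both $u$ and $du/d\alpha$ to vanish there, contradicting uniqueness. Suppose $\mathcal{Z}_{\iOmega}(u)$ has an accumulation point $x_0\in\iOmega$, so there is a sequence $(s_n)$ of distinct points of $\mathcal{Z}_{\iOmega}(u)$ with $s_n\to x_0$. Passing to a subsequence I may assume $(s_n)$ is strictly monotone; by symmetry I treat only the case $s_n\downarrow x_0$.

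First I would extract information about $u$ at $x_0$. Since each $s_n$ lies to the right of $x_0$ and $u\in\BVl(\iOmega,\mathbb{R})$, both one-sided limits of $u$ at $s_n$ converge to the right limit of $u$ at $x_0$; that is, $u^-(s_n)\to u^+(x_0)$ and $u^+(s_n)\to u^+(x_0)$ as $n\to\infty$. Letting $n\to\infty$ in the inequality $u^-(s_n)u^+(s_n)\le0$, which holds because $s_n\in\mathcal{Z}_{\iOmega}(u)$, yields $u^+(x_0)^2\le0$, hence $u^+(x_0)=0$.

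Next I would transfer the accumulation to $du/d\alpha$ by means of the Mean Value Theorem \ref{sss05}. For each $n$ the points $s_{n+1}<s_n$ both lie in $\mathcal{Z}_{\iOmega}(u)$, so there is a point $t_n\in\mathcal{Z}_{[s_{n+1},s_n]}(du/d\alpha)$. Because $x_0<s_{n+1}\le t_n\le s_n$ and $s_n\to x_0$, we have $t_n\downarrow x_0$, and exactly the argument of the previous paragraph, now applied to the $r$-balanced function $du/d\alpha$ together with the inequalities $(du/d\alpha)^-(t_n)(du/d\alpha)^+(t_n)\le0$, gives $(du/d\alpha)^+(x_0)=0$.

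Finally I would invoke the jump relation and uniqueness. Writing $v=\big(u,\ du/d\alpha\big)^{\top}$, the two conclusions together say $v^+(x_0)=0$. By Equation \eqref{we7610s200}, $v^+(x_0)$ equals a $2\times2$ matrix applied to $v(x_0)$, and this matrix is invertible because its determinant is $\theta_r(x_0)\ne0$ under Hypothesis \ref{H:4.1}; hence $v(x_0)=0$, i.e. $u(x_0)=0$ and $(du/d\alpha)(x_0)=0$. Since the trivial function solves the same equation with these initial data, the uniqueness part of Theorem \ref{th00121} forces $u\equiv0$, contradicting that $u$ is nontrivial. In the symmetric case $s_n\uparrow x_0$ one obtains $v^-(x_0)=0$ and concludes identically using Equation \eqref{wg9sur0165}, whose matrix is invertible because $\theta_{1-r}(x_0)\ne0$. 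The main obstacle is the bookkeeping of one-sided limits: one must ensure that the accumulation of sign-changes of $u$ and, via the Mean Value Theorem, of $du/d\alpha$ occurs on the \emph{same} side of $x_0$, so that the two vanishing statements combine into $v^+(x_0)=0$ (or $v^-(x_0)=0$) rather than into two unrelated one-sided conditions from which no contradiction would follow.
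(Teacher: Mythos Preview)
Your proof is correct and follows essentially the same approach as the paper's: both argue by contradiction, use a one-sided sequence in $\mathcal{Z}_{\iOmega}(u)$ and the Mean Value Theorem~\ref{sss05} to force $u^+(x_0)=(du/d\alpha)^+(x_0)=0$, then invoke the invertibility of the matrix in \eqref{we7610s200} (determinant $\theta_r(x_0)\ne0$) and uniqueness. The only cosmetic difference is that the paper applies the Mean Value Theorem on $[s,x_n]$ using the accumulation point $s\in\mathcal{Z}_{\iOmega}(u)$ itself as an endpoint, whereas you apply it on $[s_{n+1},s_n]$ between consecutive sequence terms; both work equally well.
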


\begin{proof}
Let $u$ be a real-valued $r$-balanced solution of $-d(dy/d\alpha)+yd\beta=0$ on $\iOmega$ such that $\mathcal{Z}_{\iOmega}(u)$ contains a point $s$ that is not isolated. We are going to show that $u$ is the trivial solution. Since $s$ is not isolated there is a sequence of distinct points $x_n\in\mathcal{Z}_{\iOmega}(u)$ that converges to $s$. We may assume that either $x_n>s$ for all $n$ or $x_n<s$ for all $n$. Let us consider the case $x_n>s$ for all $n$. According to the Mean Value Theorem \ref{sss05} there exists a sequence $y_n\in\mathcal{Z}_{[s,x_n]}(du/d\alpha)$. By taking the limit as $n\rightarrow\infty$ to each of the inequalities $u^-(x_n)u^+(x_n)\leq0$ and $(du/d\alpha)^-(y_n)(du/d\alpha)^+(y_n)\leq0$ we obtain $u^+(s)=(du/d\alpha)^+(s)=0$. According to Equation \eqref{we7610s200} $u(s)=(du/d\alpha)(s)=0$. According to the uniqueness theorem for initial value problems $u$ is the trivial solution. Since a similar argument works for the case $x_n<s$ for all $n$ the proof is complete.
\end{proof}

\begin{defn}
Assume that $r$, $\alpha$, and $\beta$ satisfy Hypothesis \ref{H:4.1}. Let $u$ be a nontrivial real-valued $r$-balanced solution of $-d(dy/d\alpha)+yd\beta=0$ on $\iOmega$. $u$ is said to change sign at $s\in\iOmega$ if there exists $\delta>0$ such that $u^-<0$ on $(s-\delta,s)$ and $u^+>0$ on $(s,s+\delta)$, or $u^->0$ on $(s-\delta,s)$ and $u^+<0$ on $(s,s+\delta)$. $u$ is said to change sign in a subinterval $I$ of $\iOmega$ if it changes sign at some point in $I$.
\end{defn}

Observe that if $u$ changes sign at $s\in\iOmega$, then
$s\in\mathcal{Z}_{\iOmega}(u)$.

\begin{thm}\label{zzdnxt401}
Assume that $r$, $\alpha$, and $\beta$ satisfy Hypothesis \ref{H:4.1}. Let $u$ be a nontrivial real-valued $r$-balanced solution of $-d(dy/d\alpha)+yd\beta=0$ on $\iOmega$ and let $s\in\iOmega$. If $u(s)=0$, then $u$ changes sign at $s$.
\end{thm}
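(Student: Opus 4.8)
The plan is to reduce everything to the sign of the quasi-derivative $w:=(du/d\alpha)(s)$, exploiting the fact that the hypothesis $u(s)=0$ forces $du/d\alpha$ to be continuous at $s$. First I would record that $w\neq 0$: if both $u(s)=0$ and $(du/d\alpha)(s)=0$, then the initial data of $u$ at $s$ vanishes, so the uniqueness part of Theorem \ref{th00121} would make $u$ the trivial solution, contrary to hypothesis. Replacing $u$ by $-u$ if necessary---this changes neither the hypothesis $u(s)=0$ nor the conclusion, since the definition of a sign change is symmetric under negation---I may assume $w>0$.

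Next I would compute the one-sided values at $s$ from the jump relations \eqref{we7610s200} and \eqref{wg9sur0165}, using $u(s)=0$. This yields $u^+(s)=r\,\Delta_{d\alpha}(s)\,w$ and $u^-(s)=-(1-r)\,\Delta_{d\alpha}(s)\,w$, and, for the quasi-derivative, $(du/d\alpha)^+(s)=(du/d\alpha)^-(s)=w$. Since $\alpha$ is increasing we have $\Delta_{d\alpha}(s)\ge 0$, so with $r\in[0,1]$ and $w>0$ Hypothesis \ref{H:4.1} gives $u^-(s)\le 0\le u^+(s)$; more importantly, $du/d\alpha$ is continuous at $s$ with positive value $w$ there. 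Because $du/d\alpha$ has locally bounded variation, continuity at the single point $s$ yields a $\delta>0$ with $du/d\alpha\ge w/2>0$ throughout $(s-\delta,s+\delta)$.

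The heart of the argument is then to convert this pointwise positivity of $du/d\alpha$ into the one-sided positivity and negativity of $u$. I would integrate the measure identity $du=(du/d\alpha)\,d\alpha$ over half-open intervals abutting $s$: for $x\in(s,s+\delta)$ one has $u^+(x)=u^+(s)+\int_{(s,x]}(du/d\alpha)\,d\alpha$, while for $x\in(s-\delta,s)$ one has $u^-(x)=u^-(s)-\int_{[x,s)}(du/d\alpha)\,d\alpha$. Since $\alpha$ is strictly increasing every such interval has strictly positive $d\alpha$-measure, so each integral is strictly positive. Combined with $u^+(s)\ge 0$ and $u^-(s)\le 0$ this gives $u^+>0$ on $(s,s+\delta)$ and $u^-<0$ on $(s-\delta,s)$, which is precisely the definition of a sign change at $s$; the discarded case $w<0$ produces the opposite alternative.

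The main obstacle, and the reason the statement is not automatic, is the possibility that $s$ is itself a jump point of $u$ (that is, $\Delta_{d\alpha}(s)>0$), so that $u$ need not be continuous there and one cannot simply invoke a classical intermediate-value picture. The bookkeeping that resolves this is exactly the computation of $u^\pm(s)$ above: the hypothesis $u(s)=0$ makes the jump of $u$ straddle zero in the correct direction ($u^-(s)\le 0\le u^+(s)$ when $w>0$), so the jump reinforces rather than obstructs the sign change. I should also make sure the two demands on $\delta$---positivity of $du/d\alpha$ and containment of the interval in $\iOmega$---hold at once, which is immediate upon shrinking $\delta$.
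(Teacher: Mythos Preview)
Your proof is correct and follows essentially the same strategy as the paper: observe that $(du/d\alpha)(s)\neq 0$ by uniqueness, use the jump relations \eqref{we7610s200}--\eqref{wg9sur0165} together with $u(s)=0$ to see that $du/d\alpha$ is continuous at $s$, obtain a neighbourhood on which $du/d\alpha$ has a fixed sign, and then integrate $du=(du/d\alpha)\,d\alpha$ over half-open intervals abutting $s$.

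The one noteworthy difference is in the final bookkeeping. The paper shows only that $0=u(s)$ lies strictly between $u^-(x)$ and $u^+(y)$ for all $x\in(s-\delta,s)$ and $y\in(s,s+\delta)$, and then invokes Theorem~\ref{10664yrlz} (isolatedness of $\mathcal Z_\iOmega(u)$) to conclude that $u^-$ keeps a fixed sign on $(s-\delta,s)$ and $u^+$ the opposite sign on $(s,s+\delta)$. You instead compute $u^\pm(s)$ explicitly from the jump relations, obtaining $u^-(s)\le 0\le u^+(s)$ directly, so the integration step immediately gives strict signs on the punctured neighbourhood without any appeal to Theorem~\ref{10664yrlz}. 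This makes your argument self-contained and marginally cleaner; the paper's route is a touch more indirect but has the virtue of treating the two cases $w>0$ and $w<0$ simultaneously.
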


\begin{proof}
Since $u$ is not the trivial solution it follows that $(du/d\alpha)(s)\neq0$. Also since $u(s)=0$ it follows that $(du/d\alpha)^-(s)=(du/d\alpha)^+(s)=(du/d\alpha)(s)$. Then there exists $\delta>0$ such that either $du/d\alpha>0$ on $(s-\delta,s+\delta)$ or $du/d\alpha<0$ on $(s-\delta,s+\delta)$. If $x\in(s-\delta,s)$ and $y\in(s,s+\delta)$, then
$$(u^-(s)-u^-(x))(u^+(y)-u^-(s))=\int_{[x,s)}\frac{du}{d\alpha}d\alpha \int_{[s,y]}\frac{du}{d\alpha}d\alpha>0.$$
This shows that $u^-(s)$ is between $u^-(x)$ and $u^+(y)$. A similar argument, by integrating over $[x,s]$ and $(s,y]$, shows that $u^+(s)$ is between $u^-(x)$ and $u^+(y)$. Therefore $0=u(s)=ru^-(s)+(1-r)u^+(s)$ is between $u^-(x)$ and $u^+(y)$. Since $x$ and $y$ are arbitrary points in $(s-\delta,s)$ and $(s,s+\delta)$, respectively, it follows from Theorem \ref{10664yrlz} that $u$ changes sign at $s$.
\end{proof}

\begin{thm}\label{ytrop037}
Assume that $r$, $\alpha$, and $\beta$ satisfy Hypothesis \ref{H:4.1}. Let $u$ be a nontrivial real-valued $r$-balanced solution of $-d(dy/d\alpha)+yd\beta=0$ on $\iOmega$. Then $u$ does not change sign in $\iOmega$ if and only if either $u>0$ on $\iOmega$ or $u<0$ on $\iOmega$.
\end{thm}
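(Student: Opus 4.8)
The plan is to treat the two implications separately, with essentially all the work going into the reverse direction. The forward implication is immediate: if $u>0$ on $\iOmega$, then for every $x\in\iOmega$ one has $u^-(x)=\lim_{t\uparrow x}u(t)\ge0$ and $u^+(x)=\lim_{t\downarrow x}u(t)\ge0$, so neither $u^-$ nor $u^+$ is negative on any subinterval and the defining condition for a sign change can never be met; the case $u<0$ is identical. Thus the substance is the reverse implication: if $u$ does not change sign anywhere in $\iOmega$, then either $u>0$ or $u<0$ throughout $\iOmega$.

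For the reverse direction I would argue by connectedness of $\iOmega$. Set $P=\{x\in\iOmega:u>0 \text{ on some neighborhood of } x\}$ and $N=\{x\in\iOmega:u<0 \text{ on some neighborhood of } x\}$; these are open and disjoint, so once I show $P\cup N=\iOmega$ the connectedness of $\iOmega$ forces one of them to be empty, giving $u>0$ on $\iOmega$ or $u<0$ on $\iOmega$. To see $P\cup N=\iOmega$, first recall from Theorem \ref{10664yrlz} that $\mathcal{Z}_{\iOmega}(u)$ is discrete: every point is isolated, and the limiting argument proving that theorem also excludes accumulation points of $\mathcal{Z}_{\iOmega}(u)$ lying in $\iOmega\setminus\mathcal{Z}_{\iOmega}(u)$, so each $x\in\iOmega$ has a neighborhood meeting $\mathcal{Z}_{\iOmega}(u)$ in at most $x$ itself. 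For $x\notin\mathcal{Z}_{\iOmega}(u)$ this neighborhood is free of $\mathcal{Z}_{\iOmega}(u)$, and the Intermediate Value Theorem \ref{ss3} makes $u^-,u^+$, and hence $u=ru^-+(1-r)u^+$, of one constant nonzero sign there, so $x\in P\cup N$.

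The remaining, and main, difficulty is the points $x\in\mathcal{Z}_{\iOmega}(u)$ themselves. For such an $x$ choose $\delta>0$ with $(x-\delta,x+\delta)\cap\mathcal{Z}_{\iOmega}(u)=\{x\}$; by Theorem \ref{ss3} the signs of $u^-,u^+$ are constant on $(x-\delta,x)$ and on $(x,x+\delta)$, and if these two signs were opposite then by definition $u$ would change sign at $x$, contrary to hypothesis. Hence they agree, say both positive, giving $u>0$ on the punctured neighborhood. Passing to the limits $u^-(x)=\lim_{t\uparrow x}u^-(t)\ge0$ and $u^+(x)=\lim_{t\downarrow x}u^+(t)\ge0$ yields $u(x)=ru^-(x)+(1-r)u^+(x)\ge0$ since $r\in[0,1]$; and were $u(x)=0$, Theorem \ref{zzdnxt401} would force $u$ to change sign at $x$, again a contradiction, so $u(x)>0$ and therefore $u>0$ on all of $(x-\delta,x+\delta)$, i.e.\ $x\in P$. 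This establishes $P\cup N=\iOmega$ and completes the plan. The delicate steps are the non-accumulation of $\mathcal{Z}_{\iOmega}(u)$, needed to obtain clean one-sided neighborhoods on which Theorem \ref{ss3} applies, and the final non-vanishing step at the zeros, which rests squarely on Theorem \ref{zzdnxt401}.
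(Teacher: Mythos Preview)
Your argument is correct and uses exactly the same three ingredients as the paper—Theorem \ref{ss3}, Theorem \ref{10664yrlz}, and Theorem \ref{zzdnxt401}—but the organization differs. The paper argues by contradiction: assuming $u(s)u(t)\le0$ for some $s<t$, it disposes of the equality case via Theorem \ref{zzdnxt401}, perturbs $s,t$ to continuity points of $u$, invokes Theorem \ref{ss3} to produce a point of $\mathcal{Z}_{(s,t)}(u)$, and then, since Theorem \ref{10664yrlz} makes $\mathcal{Z}_{(s,t)}(u)$ finite, walks through the finitely many points $x_1<\cdots<x_p$ noting that the sign of $u$ cannot flip across any $x_j$, contradicting $u(s)u(t)<0$. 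Your route is the local-to-global connectedness version of the same idea: you show directly that every point of $\iOmega$ has a neighborhood on which $u$ is of one strict sign, and let connectedness finish. Your approach avoids the small perturbation to continuity points, at the cost of needing the extra observation that $\mathcal{Z}_{\iOmega}(u)$ has no accumulation points in $\iOmega\setminus\mathcal{Z}_{\iOmega}(u)$; as you note, this is immediate from the same limiting computation used in Theorem \ref{10664yrlz} (if $x_n\in\mathcal{Z}_{\iOmega}(u)$ and $x_n\downarrow s$ then $u^-(x_n)u^+(x_n)\le0$ forces $(u^+(s))^2\le0$, so $s\in\mathcal{Z}_{\iOmega}(u)$). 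Both arguments are short; the paper's is marginally more elementary, yours slightly cleaner conceptually.
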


\begin{proof}
If $u>0$ on $\iOmega$, then $u^\mp\geq0$ on $\iOmega$ and so $u$ does not change sign in $\iOmega$. A similar statement holds when $u<0$ on $\iOmega$.

To prove the converse assume to the contrary that there are $s<t$ in $\iOmega$ such that $u(s)u(t)<0$ (the equality leads to a direct contradiction according to Theorem \ref{zzdnxt401}). We may assume that $u$ is continuous at $s$ and $t$. According to the Intermediate Value Theorem \ref{ss3} $\mathcal{Z}_{(s,t)}(u)$ is not empty. With the aid of Theorem \ref{10664yrlz}
$\mathcal{Z}_{(s,t)}(u)$ is finite. We may assume that $\mathcal{Z}_{(s,t)}(u)$ consists of the points $x_1<x_2<\dots<x_p$. Put $x_0=s$ and $x_{p+1}=t$. Then $u$ have the same sign on each of $(x_{j-1},x_{j})$ and $(x_{j},x_{j+1})$ for all $1\leq j\leq p$ since $u$ does not change sign in $\iOmega$. This contradicts that $u^-(s)u^-(t)<0$.
\end{proof}

\begin{thm}\label{ertssiao0}
Assume that $r$, $\alpha$, and $\beta$ satisfy Hypothesis \ref{H:4.1}. Let $u$ be a nontrivial real-valued $r$-balanced solution of $-d(dy/d\alpha)+yd\beta=0$ on $\iOmega$ and let $s\in\iOmega$. Then
\begin{enumerate}[wide=0pt, leftmargin=19pt, labelwidth=0pt, align=left]
\item If $u^-(s)=0$, then $u$ changes sign at $s$ if and only if $\theta_{1-r}(s)>0$.
\item If $u^+(s)=0$, then $u$ changes sign at $s$ if and only if $\theta_{r}(s)>0$.
\item $u^-(s)u^+(s)<0$ if and only if $-r\Delta_{d\alpha}(s)<u(s)/(du/d\alpha)(s)<(1-r)\Delta_{d\alpha}(s)$.
\end{enumerate}
\end{thm}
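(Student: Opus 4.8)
The plan is to treat the three statements separately, deriving all of them from the two linear relations \eqref{we7610s200} and \eqref{wg9sur0165} evaluated at $s$, together with the sign-propagation supplied by the Intermediate Value Theorem \ref{ss3} and the isolation result Theorem \ref{10664yrlz}. Throughout I would abbreviate $u=u(s)$, $w=(du/d\alpha)(s)$, $a=\Delta_{d\alpha}(s)\ge 0$ (nonnegative since $\alpha$ is increasing), and $b=\Delta_{d\beta}(s)$, so that \eqref{we7610s200} and \eqref{wg9sur0165} express $u^{\pm}(s)$ and $(du/d\alpha)^{\pm}(s)$ as explicit linear combinations of $u$ and $w$.

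For part (3), I would simply compute $u^-(s)u^+(s)=(u-(1-r)aw)(u+raw)$ and read the right-hand side as a quadratic in $u$ whose roots are $(1-r)aw$ and $-raw$. Then $u^-(s)u^+(s)<0$ holds exactly when $u$ lies strictly between these roots, which already forces $a>0$ and $w\neq 0$ (otherwise the roots coincide and the product is $u^2\ge 0$). Dividing the resulting double inequality by $w$, while tracking the reversal when $w<0$, yields precisely $-r\Delta_{d\alpha}(s)<u/w<(1-r)\Delta_{d\alpha}(s)$; conversely this inequality is satisfiable only when $a>0$, and reversing the computation then gives $u^-(s)u^+(s)<0$. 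No serious obstacle is expected here.

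For parts (1) and (2), which are mirror images under $r\leftrightarrow 1-r$ and a left/right reflection, the decisive step is an identity for the one-sided limit of the quasi-derivative. If $u^-(s)=0$, then \eqref{wg9sur0165} gives $u=(1-r)aw$, and substituting this into the second component of \eqref{wg9sur0165} collapses it to $(du/d\alpha)^-(s)=w\,\theta_{1-r}(s)$; likewise $u^+(s)=aw$. First I would dispose of the degenerate cases: if $a=0$ then $u^-=u^+=u=0$ and the conclusion is Theorem \ref{zzdnxt401} together with $\theta_{1-r}(s)=1>0$, while $w=0$ would force $u\equiv 0$ by uniqueness. So assume $a>0$ and $w\neq 0$. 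On the right the sign of $u$ near $s$ is the sign of $u^+(s)=aw$, that is, the sign of $w$. On the left I would exploit that $(du/d\alpha)^-(s)=w\,\theta_{1-r}(s)\neq 0$ to conclude $\mathcal{Z}_{(s-\delta,s)}(du/d\alpha)$ is empty, so by Theorem \ref{ss3} the function $du/d\alpha$ has constant sign $\sgn(w\,\theta_{1-r}(s))$ there; integrating, $u^-(t)=u^-(s)-\int_{[t,s)}(du/d\alpha)\,d\alpha=-\int_{[t,s)}(du/d\alpha)\,d\alpha$ has sign $-\sgn(w\,\theta_{1-r}(s))$ for $t\in(s-\delta,s)$. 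Comparing the two sides, they are opposite exactly when $\theta_{1-r}(s)>0$, which is the claim; part (2) then follows from the symmetric identities $u^-(s)=-aw$ and $(du/d\alpha)^+(s)=w\,\theta_r(s)$, integrating over $(s,t]$ instead.

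The main obstacle I anticipate is the left-side sign analysis in (1): I must make certain that nonvanishing of the single one-sided limit $(du/d\alpha)^-(s)$ really forces $du/d\alpha$ to keep one sign across an entire left-neighborhood, so that $\int_{[t,s)}(du/d\alpha)\,d\alpha$ acquires a definite, strictly nonzero sign (using that $d\alpha$ is positive). This I would settle by observing that both $(du/d\alpha)^-$ and $(du/d\alpha)^+$ tend to $(du/d\alpha)^-(s)$ as $t\uparrow s$, whence $(du/d\alpha)^-(t)(du/d\alpha)^+(t)>0$ near $s$, emptying $\mathcal{Z}_{(s-\delta,s)}(du/d\alpha)$ and letting Theorem \ref{ss3} furnish the uniform sign required for the integral estimate.
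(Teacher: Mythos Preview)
Your proposal is correct and follows essentially the same approach as the paper. The only cosmetic difference is that the paper parametrizes part~(1) by $(du/d\alpha)^-(s)$, deriving the single relation $u^+(s)=\dfrac{\Delta_{d\alpha}(s)}{\theta_{1-r}(s)}\,(du/d\alpha)^-(s)$ and then asserting in one line that ``$u$ changes sign at $s$ iff $u^+(s)(du/d\alpha)^-(s)>0$'', whereas you parametrize by $w=(du/d\alpha)(s)$ and spell out that last equivalence explicitly via the integration argument; part~(3) is identical in both treatments.
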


\begin{proof}
The first and the second statements have similar proofs so we only prove the first statement. Assume that $u^-(s)=0$. The case $\Delta_{d\alpha}(s)=0$ is trivial since Theorem \ref{zzdnxt401} tells us that $u$ changes sign at $s$. Let us assume that $\Delta_{d\alpha}(s)\neq0$. It follows from Equations \eqref{we7610s200} and \eqref{wg9sur0165} that
$$u^+(s)=\frac{\Delta_{d\alpha}(s)}{\theta_{1-r}(s)}\;\Big(\frac{du}{d\alpha}\Big)^-(s)$$
which is not zero since $u$ is not the trivial solution. Then
$\theta_{1-r}(s)>0$ if and only if $u^+(s)(du/d\alpha)^-(s)>0$. This completes the proof since $u$ changes sign at $s$ if and only if $u^+(s)(du/d\alpha)^-(s)>0$.

The third statement follows from $u^-(s)=u(s)-(1-r)\Delta_{d\alpha}(s)(du/d\alpha)(s)$ and  $u^+(s)=u(s)+r\Delta_{d\alpha}(s)(du/d\alpha)(s)$. These two equations can be seen directly from Equations \eqref{we7610s200} and \eqref{wg9sur0165}.
\end{proof}

The following result is an immediate consequence of Theorem \ref{ertssiao0}.

\begin{cor}
Assume that $r$, $\alpha$, and $\beta$ satisfy Hypothesis \ref{H:4.1}. Then for every nontrivial real-valued $r$-balanced solution $u$ of $-d(dy/d\alpha)+yd\beta=0$ on $\iOmega$ the set $\mathcal{Z}_{\iOmega}(u)$ consists precisely of all points in $\iOmega$ at which $u$ changes sign if and only if $\theta_r(x)>0$ and $\theta_{1-r}(x)>0$ for all $x\in\iOmega$.
\end{cor}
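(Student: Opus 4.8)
The plan is to reduce the whole statement to Theorem~\ref{ertssiao0}, which already characterizes sign change at a point of $\mathcal{Z}_{\iOmega}(u)$ in terms of the signs of $\theta_r$ and $\theta_{1-r}$. Since the remark following the definition of sign change guarantees that every sign-changing point lies in $\mathcal{Z}_{\iOmega}(u)$, the assertion that $\mathcal{Z}_{\iOmega}(u)$ consists precisely of the sign-changing points is equivalent to the reverse inclusion, i.e.\ to the statement that \emph{every} $s\in\mathcal{Z}_{\iOmega}(u)$ is a point of sign change. I would therefore analyze an arbitrary $s\in\mathcal{Z}_{\iOmega}(u)$, where $u^-(s)u^+(s)\le 0$, by splitting into the three exhaustive cases $u^-(s)=0$, $u^+(s)=0$, and $u^-(s)u^+(s)<0$.

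For the case $u^-(s)u^+(s)<0$ I would show that $u$ changes sign at $s$ unconditionally. By Theorem~\ref{10664yrlz} the point $s$ is isolated in $\mathcal{Z}_{\iOmega}(u)$, so there is $\delta>0$ with $(s-\delta,s)\cup(s,s+\delta)$ disjoint from $\mathcal{Z}_{\iOmega}(u)$; by the Intermediate Value Theorem~\ref{ss3}, $u^-$ and $u^+$ keep a constant sign on each of $(s-\delta,s)$ and $(s,s+\delta)$, and since $u=ru^-+(1-r)u^+$ with $r\in[0,1]$ the sign on $(s-\delta,s)$ equals that of $u^-(s)\ne0$ and the sign on $(s,s+\delta)$ equals that of $u^+(s)\ne0$. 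As these are opposite, $u^-$ and $u^+$ change sign across $s$, which is exactly a sign change. In the remaining two cases Theorem~\ref{ertssiao0}(1) and (2) say that $u$ changes sign at $s$ if and only if $\theta_{1-r}(s)>0$ and $\theta_r(s)>0$, respectively.

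This immediately yields sufficiency: if $\theta_r>0$ and $\theta_{1-r}>0$ everywhere, then in all three cases $s$ is a point of sign change, so $\mathcal{Z}_{\iOmega}(u)$ equals the set of sign-changing points for every nontrivial solution $u$. For necessity I would argue by contraposition. Suppose, say, $\theta_{1-r}(x_0)<0$ for some $x_0$ (recall Hypothesis~\ref{H:4.1} forbids the value $0$, so failure of positivity means negativity); in particular $\Delta_{d\alpha}(x_0)\ne0$. Using the existence Theorem~\ref{th00121}, I would produce the solution $u$ with real initial data $(du/d\alpha)(x_0)=B\ne0$ and $u(x_0)=(1-r)\Delta_{d\alpha}(x_0)B$. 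Then the relation $u^-(x_0)=u(x_0)-(1-r)\Delta_{d\alpha}(x_0)(du/d\alpha)(x_0)$ from Theorem~\ref{ertssiao0} gives $u^-(x_0)=0$, so $x_0\in\mathcal{Z}_{\iOmega}(u)$, while $u$ is nontrivial because $B\ne0$; by Theorem~\ref{ertssiao0}(1), $u$ does \emph{not} change sign at $x_0$. This $u$ violates the equality, and the case $\theta_r(x_0)<0$ is handled symmetrically by forcing $u^+(x_0)=0$ via $u(x_0)=-r\Delta_{d\alpha}(x_0)B$ and invoking Theorem~\ref{ertssiao0}(2).

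The main obstacle I expect is the necessity direction: one must convert a pointwise sign condition into an actual counterexample solution, which requires choosing the initial data so that $u^-(x_0)=0$ (or $u^+(x_0)=0$) while simultaneously keeping $u$ nontrivial, and then certifying via Theorem~\ref{th00121} that such a real solution genuinely exists. The case $u^-(s)u^+(s)<0$ of sufficiency is the other delicate point, since there sign change is not read off from $\theta$ but only from the combined use of isolation and the Intermediate Value Theorem.
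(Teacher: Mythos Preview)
Your argument is correct and is exactly the unpacking of what the paper has in mind: the corollary is stated as ``an immediate consequence of Theorem~\ref{ertssiao0}'' with no further proof, and your reduction to parts (1)--(3) of that theorem, together with the isolation result Theorem~\ref{10664yrlz}, is the intended route. In particular, your handling of the necessity direction---manufacturing a solution with $u^-(x_0)=0$ (resp.\ $u^+(x_0)=0$) by choosing initial data at $x_0$ via the relation $u^-=u-(1-r)\Delta_{d\alpha}\,(du/d\alpha)$---is precisely the detail the paper leaves implicit.
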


\section{sturm separation theorem}\label{spt0spt}

Assume that $r$, $\alpha$, and $\beta$ satisfy Hypothesis \ref{H:4.1}. The Wronskian of two $r$-balanced solutions $u$ and $v$ of $-d(dy/d\alpha)+yd\beta=0$ is defined by
$$W[u,v](x)=u(x)\frac{dv}{d\alpha}(x)-v(x)\frac{du}{d\alpha}(x).$$
Obviously $W[u,v]\in\BVl(\iOmega,\mathbb{R})$. With the aid of Equations \eqref{we7610s200} and \eqref{wg9sur0165}, it is straightforward to verify that
\begin{equation}\label{ze5610rr}
W[u,v]=W^-[u,v]/\theta_{1-r}=W^+[u,v]/\theta_{r}.
\end{equation}
Then the following statements are equivalent.
\begin{enumerate}[wide=0pt, leftmargin=19pt, labelwidth=13pt, align=left]
\item $u$ and $v$ are linearly dependent.
\item $W[u,v](x)=0$ for some $x\in\iOmega$.
\item $W^-[u,v](x)=0$ for some $x\in\iOmega$.
\item $W^+[u,v](x)=0$ for some $x\in\iOmega$.
\end{enumerate}

We define $\theta(x)=\theta_r(x)/\theta_{1-r}(x)$. Then it follows from Equation \eqref{ze5610rr} that
\begin{equation}\label{zz32901}
W^+[u,v](x)=W^-[u,v](x)\theta(x),
\end{equation}

The number of points $x\in\iOmega$ for which $\theta_r(x)<0$ is finite in each compact subset of $\iOmega$. Similar statement holds for each of $\theta_{1-r}$ and $\theta$.

The following result determines the derivative of the Wronskian. It uses the fact that $\Delta_{d\alpha}d\beta=\Delta_{d\beta}d\alpha$.

\begin{thm}
Assume that $r$, $\alpha$, and $\beta$ satisfy Hypothesis \ref{H:4.1}. If $u$ and $v$ are $r$-balanced solutions of $-d(dy/d\alpha)+yd\beta=0$, then
\begin{equation}\label{strj0121}
dW[u,v]=W^-[u,v]\frac{(1-2r)\Delta_{d\alpha}}{\theta_{1-r}}d\beta=W^+[u,v]\frac{(1-2r)\Delta_{d\alpha}}{\theta_{r}}d\beta
\end{equation}
\end{thm}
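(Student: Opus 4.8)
The plan is to compute $dW[u,v]$ directly from the product rule and then collapse the result back onto the Wronskian using the algebraic relations between the one-sided limits and the $r$-balanced values recorded in Equations \eqref{we7610s200} and \eqref{wg9sur0165}; the last move is to trade the surviving $d\alpha$-term for a $d\beta$-term by means of $\Delta_{d\alpha}d\beta=\Delta_{d\beta}d\alpha$. For brevity write $p=du/d\alpha$ and $q=dv/d\alpha$, so that $W[u,v]=uq-vp$. Since $u$ and $v$ solve the equation we have $dp=u\,d\beta$ and $dq=v\,d\beta$, while $du=p\,d\alpha$ and $dv=q\,d\alpha$.

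First I would apply the single product rule $d(fg)=f^-\,dg+g^+\,df$ to each of the products $uq$ and $vp$; using this consistent form is what aligns the one-sided limits with the correct substitution equations, since the $d\beta$-coefficient will come out in terms of left limits and the $d\alpha$-coefficient in terms of right limits. This gives
$$d(uq)=u^-v\,d\beta+q^+p\,d\alpha,\qquad d(vp)=v^-u\,d\beta+p^+q\,d\alpha,$$
and subtracting yields
$$dW[u,v]=(u^-v-v^-u)\,d\beta+(q^+p-p^+q)\,d\alpha.$$

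The next step is to simplify each coefficient. Inserting the left-limit relations $u^-=u-(1-r)\Delta_{d\alpha}p$ and $v^-=v-(1-r)\Delta_{d\alpha}q$ from Equation \eqref{wg9sur0165}, the $uv$ terms cancel and I expect $u^-v-v^-u=(1-r)\Delta_{d\alpha}(qu-pv)=(1-r)\Delta_{d\alpha}W[u,v]$. Inserting the right-limit relations $p^+=p+r\Delta_{d\beta}u$ and $q^+=q+r\Delta_{d\beta}v$ from Equation \eqref{we7610s200}, the $pq$ terms cancel and I expect $q^+p-p^+q=r\Delta_{d\beta}(vp-uq)=-r\Delta_{d\beta}W[u,v]$. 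At this stage
$$dW[u,v]=(1-r)\Delta_{d\alpha}W[u,v]\,d\beta-r\Delta_{d\beta}W[u,v]\,d\alpha.$$

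Finally I would invoke $\Delta_{d\alpha}d\beta=\Delta_{d\beta}d\alpha$. Because $\Delta_{d\alpha}$ and $\Delta_{d\beta}$ both vanish off the countable set of atoms, each of the measures $\Delta_{d\alpha}W[u,v]\,d\beta$ and $\Delta_{d\beta}W[u,v]\,d\alpha$ is purely atomic and assigns mass $W[u,v](s)\Delta_{d\alpha}(s)\Delta_{d\beta}(s)$ to every singleton $\{s\}$, so the two agree; hence the two terms above combine into $dW[u,v]=(1-2r)\Delta_{d\alpha}W[u,v]\,d\beta$. Replacing $W[u,v]$ by $W^-[u,v]/\theta_{1-r}$ or by $W^+[u,v]/\theta_r$ according to Equation \eqref{ze5610rr} then produces the two claimed expressions. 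The only genuinely delicate point is this last substitution: the identity $W[u,v]=W^-[u,v]/\theta_{1-r}$ must be used as an equality of $r$-balanced functions, valid at every point and in particular at the atoms of $d\alpha$ on which the measure is supported, and one should note that Hypothesis \ref{H:4.1} keeps $\theta_{1-r}$ and $\theta_r$ nonzero so that the division is legitimate.
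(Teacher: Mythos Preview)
Your argument is correct and matches the approach the paper itself signals: the paper does not spell out a proof but only notes that the computation ``uses the fact that $\Delta_{d\alpha}d\beta=\Delta_{d\beta}d\alpha$,'' which is precisely the device you employ after applying the product rule and the jump relations \eqref{we7610s200}--\eqref{wg9sur0165}. The only cosmetic remark is that the identity $\Delta_{d\alpha}d\beta=\Delta_{d\beta}d\alpha$ already holds as an equality of measures, so your atom-by-atom justification, while correct, can be cited directly as the paper does.
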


For the standard Sturm-Liouville homogeneous differential equation, the Wronskian of two linearly independent solution is a nonzero constant everywhere on $\iOmega$ which is the key to prove the Sturm separation theorem, see for example Zettl \cite{MR2170950}.
Unfortunately Equation \eqref{strj0121} tells us that the derivative of $W[u,v]$, even when $u$ and $v$ are linearly independent, is not necessarily the zero measure, unless the two solutions are balanced (i.e., $r=1/2$).

The following result is the key to prove a Sturm-type separation theorem for the equation $-d(dy/d\alpha)+yd\beta=0$.

\begin{lem}\label{sccv65omn}
Assume that $r$, $\alpha$, and $\beta$ satisfy Hypothesis \ref{H:4.1}. Let $u$ and $v$ be nontrivial real-valued $r$-balanced solutions of $-d(dy/d\alpha)+yd\beta=0$. If there are $s<t$ in $\iOmega$ such that $W^+[u,v](s)W^-[u,v](t)<0$ and $\theta>0$ on $(s,t)$, then $u$ and $v$ are linearly dependent.
\end{lem}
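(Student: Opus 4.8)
The plan is to argue by contradiction: I would assume that $u$ and $v$ are linearly independent and then show that $W^+[u,v](s)W^-[u,v](t)$ is in fact positive, contradicting the hypothesis. By the list of equivalent statements following Equation \eqref{ze5610rr}, linear independence forces $W[u,v]$, $W^-[u,v]$, and $W^+[u,v]$ to be nowhere zero on $\iOmega$. This nonvanishing is what I will exploit throughout.

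Next I would check that $\mathcal{Z}_{(s,t)}(W[u,v])$ is empty, so that the Intermediate Value Theorem \ref{ss3} can be applied to $f=W[u,v]\in\BVl(\iOmega,\mathbb{R})$ on the open interval $(s,t)$. Indeed, by Equation \eqref{zz32901} we have $W^+[u,v]=\theta\,W^-[u,v]$, so on $(s,t)$, where $\theta>0$, the product $W^-[u,v]\,W^+[u,v]=\theta\,(W^-[u,v])^2$ is strictly positive, using that $W^-[u,v]$ never vanishes. Hence no point of $(s,t)$ belongs to $\mathcal{Z}_{(s,t)}(W[u,v])$, and Theorem \ref{ss3} yields that either $W^-[u,v],W^+[u,v]>0$ throughout $(s,t)$ or $W^-[u,v],W^+[u,v]<0$ throughout $(s,t)$.

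Finally I would transfer this constant sign to the endpoints. Since $W^+[u,v]$ is right-continuous, $W^+[u,v](s)=\lim_{\tau\downarrow s}W^+[u,v](\tau)$, and since $W^-[u,v]$ is left-continuous, $W^-[u,v](t)=\lim_{\tau\uparrow t}W^-[u,v](\tau)$. Because these limits are taken through points of $(s,t)$ on which the sign is constant, each endpoint value inherits that sign; invoking nonvanishing once more upgrades the a priori weak inequality ($\ge 0$, resp.\ $\le 0$) to a strict one. In either case $W^+[u,v](s)$ and $W^-[u,v](t)$ carry the same sign, so $W^+[u,v](s)W^-[u,v](t)>0$, contradicting $W^+[u,v](s)W^-[u,v](t)<0$. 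Therefore $u$ and $v$ are linearly dependent.

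I expect the only delicate point to be this last step. The sign information produced by Theorem \ref{ss3} lives on the \emph{open} interval $(s,t)$, whereas the hypothesis concerns the boundary values $W^+[u,v](s)$ and $W^-[u,v](t)$. Passing from one to the other relies on the one-sided continuity of the representatives $W^\pm[u,v]$ together with the nonvanishing guaranteed by the assumed linear independence; without nonvanishing the boundary limits would only be known to be nonnegative (resp.\ nonpositive), and the strict inequality needed for the contradiction would not follow.
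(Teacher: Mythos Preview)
Your proof is correct and follows essentially the same route as the paper's: both rely on the relation $W^+[u,v]=\theta\,W^-[u,v]$ from Equation~\eqref{zz32901}, the Intermediate Value Theorem~\ref{ss3}, and the equivalence between linear dependence and vanishing of the Wronskian. The only organizational difference is that the paper argues directly---it invokes Theorem~\ref{ss3} to produce a point $x_0\in(s,t)$ with $W^+[u,v](x_0)W^-[u,v](x_0)\le 0$ and then uses $\theta(x_0)>0$ to force $W^-[u,v](x_0)=0$---whereas you run the contrapositive and make the one-sided continuity step at the endpoints explicit; both amount to the same argument.
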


\begin{proof}
The Intermediate Value Theorem \ref{ss3} tells us that for some $x_0\in(s,t)$ $W^+[u,v](x_0)W^-[u,v](x_0)\leq0$. Since $\theta(x_0)>0$ it follows from Equation \eqref{zz32901} that $W^-[u,v](x_0)=0$ which means $u$ and $v$ are linearly dependent.
\end{proof}

The next two theorems are the main results of this section.

\begin{thm}[Sturm Separation Theorem I]\label{sturm0168935}
Assume that $r$, $\alpha$, and $\beta$ satisfy Hypothesis \ref{H:4.1}. Let $u$ and $v$ be linearly independent real-valued $r$-balanced solutions of $-d(dy/d\alpha)+yd\beta=0$ on $\iOmega$. Let $s<t$ be consecutive points in $\mathcal{Z}_{\iOmega}(u)$ such that $\theta>0$ on $(s,t)$. Then
\begin{enumerate}[wide=0pt, leftmargin=19pt, labelwidth=13pt, align=left]
\item If $u^+(s)=u^-(t)=0$, then $v$ changes sign in $(s,t)$.
\item If $u^-(s)=u^-(t)=0$ and $\theta_{r}(s)>0$, then $v$ changes sign in $[s,t)$.
\item If $u^+(s)=u^+(t)=0$ and $\theta_{1-r}(t)>0$, then $v$ changes sign in $(s,t]$.
\item If $u^-(s)=u^+(t)=0$ and $\theta_{r}(s)\theta_{1-r}(t)>0$, then $v$ changes sign in $[s,t]$.
\end{enumerate}
\end{thm}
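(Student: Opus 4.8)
The plan is to argue by contradiction in each case: assuming $v$ has no sign change in the indicated interval, I will pin down the signs of $W^+[u,v](s)$ and $W^-[u,v](t)$, show their product is negative, and then invoke Lemma \ref{sccv65omn} (whose hypothesis $\theta>0$ on $(s,t)$ is exactly our standing assumption) to force $u$ and $v$ to be linearly dependent, contradicting the hypothesis. Two reductions are available throughout. First, since $s$ and $t$ are consecutive points of $\mathcal{Z}_{\iOmega}(u)$, the set $\mathcal{Z}_{(s,t)}(u)$ is empty, so the Intermediate Value Theorem \ref{ss3} gives $u$ a constant strict sign on $(s,t)$; replacing $u$ by $-u$ if necessary I may assume $u>0$ there, and replacing $v$ by $-v$ I may normalize its sign as well. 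Second, linear independence is equivalent, via \eqref{ze5610rr} and the list following it, to $W[u,v]$, $W^-[u,v]$, and $W^+[u,v]$ being nowhere zero, which will upgrade the weak inequalities below to strict ones.

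The engine of the sign computation is the behaviour of $du/d\alpha$ at an endpoint where $u$ vanishes one-sidedly. If $u^+(s)=0$ and $u>0$ on $(s,t)$, then writing $u^-(x)=\int_{(s,x)}(du/d\alpha)\,d\alpha$ and using that $\alpha$ is strictly increasing forces $(du/d\alpha)^+(s)>0$; symmetrically $u^-(t)=0$ forces $(du/d\alpha)^-(t)<0$. This disposes of case (1) at once: with $u^+(s)=u^-(t)=0$ one has $W^+[u,v](s)=-v^+(s)(du/d\alpha)^+(s)$ and $W^-[u,v](t)=-v^-(t)(du/d\alpha)^-(t)$, while the assumption that $v$ has no sign change in $(s,t)$ makes $v$ nonnegative there, so $v^+(s)$ and $v^-(t)$ are both nonnegative. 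The two derivative signs just found then give $W^+[u,v](s)\le 0\le W^-[u,v](t)$, and nonvanishing of the Wronskian makes both strict, so $W^+[u,v](s)W^-[u,v](t)<0$ and Lemma \ref{sccv65omn} applies.

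Cases (2)--(4) differ only in that at one or both endpoints the wrong one-sided representative of $u$ vanishes (for instance $u^-(s)=0$ rather than $u^+(s)=0$), so $W^+[u,v](s)$ no longer collapses to a single term. Here I will pass through the jump relation $W^+[u,v](s)=\theta(s)W^-[u,v](s)$ from \eqref{zz32901}, compute $W^-[u,v](s)=-v^-(s)(du/d\alpha)^-(s)$, and use \eqref{we7610s200}--\eqref{wg9sur0165} in the form $u^+(s)=\frac{\Delta_{d\alpha}(s)}{\theta_{1-r}(s)}(du/d\alpha)^-(s)$ to convert the unknown sign of $(du/d\alpha)^-(s)$ into data I control, namely $u^+(s)\ge 0$ (the right-hand limit of $u>0$) and $\Delta_{d\alpha}(s)\ge 0$. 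The extra hypothesis $\theta_r(s)>0$ is precisely what is needed to conclude $W^+[u,v](s)<0$ after this substitution; the symmetric hypothesis $\theta_{1-r}(t)>0$ does the same at $t$ in cases (3) and (4). Because the controlling inequality now involves $v^-(s)$ (resp.\ $v^+(t)$) rather than only interior values, the sign change of $v$ may occur at the endpoint itself, which is why the conclusion intervals acquire the closed end(s) $[s,t)$, $(s,t]$, or $[s,t]$. A small subcase split is needed when the one-sided zero is in fact a genuine zero $u(s)=0$ (so $u^+(s)=0$ too): there the computation of case (1) applies directly, and Theorem \ref{zzdnxt401} already guarantees the sign change of $u$.

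The main obstacle is the bookkeeping at the mismatched endpoints: turning the \emph{a priori} unknown sign of $(du/d\alpha)^-(s)$ into a definite sign for $W^+[u,v](s)$, and verifying that the endpoint $\theta$-conditions are exactly strong enough to force $W^+[u,v](s)W^-[u,v](t)<0$. A secondary delicacy is that absence of a sign change of $v$ does not by itself empty $\mathcal{Z}_{(s,t)}(v)$, since a one-sided zero of $v$ with the corresponding $\theta_{1-r}$ or $\theta_r$ negative is a non-sign-change point of $\mathcal{Z}$ by Theorem \ref{ertssiao0}; fortunately the argument uses only one-sided sign information on $v$ together with the strict nonvanishing of the Wronskian, so such touch points cause no trouble.
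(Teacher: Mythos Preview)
Your proposal is correct and follows essentially the same route as the paper: argue by contradiction, normalize $u>0$ on $(s,t)$ and $v>0$ on the relevant interval, compute the signs of $W^+[u,v](s)$ and $W^-[u,v](t)$ (using the nonvanishing of the Wronskian to get strict inequalities), and invoke Lemma~\ref{sccv65omn}. The only cosmetic difference is that for the mismatched endpoints in cases (2)--(4) the paper cites Theorem~\ref{ertssiao0} to obtain $\theta_{1-r}(s)(du/d\alpha)^-(s)>0$ (respectively $\theta_r(t)(du/d\alpha)^+(t)<0$), whereas you unpack the same computation via the explicit jump formula $u^+(s)=\frac{\Delta_{d\alpha}(s)}{\theta_{1-r}(s)}(du/d\alpha)^-(s)$; the content is identical.
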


\begin{proof}
We prove the first statement. Since $s<t$ are consecutive points in $\mathcal{Z}_{\iOmega}(u)$, we may assume that $u^-,u^+>0$ on $(s,t)$. Since $u$ is not the trivial solution, it follows that $(du/d\alpha)^+(s)\neq0$. Also since for all $x\in(s,t)$,
$$0<u^+(x)=\int_{(s,x]}du=\int_{(s,x]}\frac{du}{d\alpha} d\alpha,$$
it follows that $(du/d\alpha)^+(s)>0$ (if $(du/d\alpha)^+(s)<0$, then $du/d\alpha<0$ on $(s,s+\delta)$ for some $\delta>0$). A similar argument shows that $(du/d\alpha)^-(t)<0$. Now assume to the contrary that $v$ does not change sign in $(s,t)$. With the aid of Theorem \ref{ytrop037} we may assume $v>0$ on $(s,t)$. Since $u$ and $v$ are linearly independent it follows that $v^+(s)>0$ and $v^-(t)>0$. Then $W^+[u,v](s)=-v^+(s)(du/d\alpha)^+(s)<0$ and $W^-[u,v](t)=-v^-(t)(du/d\alpha)^-(t)>0$. It follows from Lemma \ref{sccv65omn} that $u$ and $v$ are linearly dependent. This contradiction completes the proof of the first statement.

The proof of the second and the third statements are similar so we only prove the second statement. We may assume that $u^-,u^+>0$ on $(s,t)$. Then $(du/d\alpha)^-(t)<0$. Also, Theorem \ref{ertssiao0} tells us that $\theta_{1-r}(s)(du/d\alpha)^-(s)>0$. Assume to the contrary that $v$ does not change sign in $[s,t)$. We may assume $v>0$ on $[s,t)$. Then $v^-(s)>0$ and $v^-(t)>0$. It follows that $W^-[u,v](t)>0$. On the other hand $W^+[u,v](s)=-\theta_r(s)v^-(s)(du/d\alpha)^-(s)/\theta_{1-r}(s)<0$. A similar contradiction to that in the first proof follows.

Now we prove the forth statement. We may assume $u^-,u^+>0$ on $(s,t)$. Assume to the contrary that $v$ does not change sign in $[s,t]$. Then we may assume $v>0$ on $[s,t]$. Since $v$ does not change sign at $s$ nor $t$ and since $u$ and $v$ are linearly independent it follows that $v^-(s)>0$ and $v^+(t)>0$. With the aid of Theorem \ref{ertssiao0} $\theta_{1-r}(s)(du/d\alpha)^-(s)>0$ and $\theta_{r}(t)(du/d\alpha)^+(t)<0$. Then $W^+[u,v](s)/\theta_r(s)<0$ and $W^-[u,v](t)/\theta_{1-r}(t)>0$. But $\theta_r(s)\theta_{1-r}(t)>0$ so $W^+[u,v](s)W^-[u,v](t)<0$. This contradiction completes the proof.
\end{proof}

We should point out that even if the function $\theta$ does not have the same sign on $(s,t)$, we still can have a separation theorem. This is because $\theta$ changes sign at most at a finite number of points in $(s,t)$. One can check that, if $\theta$ changes sign precisely at an even number of points in $(s,t)$, then Theorem \ref{sturm0168935} is valid.

To prove the second part of the Sturm separation theorem we need the following lemma.

\begin{lem}\label{stroops12}
Assume that $r$, $\alpha$, and $\beta$ satisfy Hypothesis \ref{H:4.1}. Let $u$ and $v$ be linearly independent real-valued $r$-balanced solutions of $-d(dy/d\alpha)+yd\beta=0$ on $\iOmega$. If, for some $s\in\iOmega$, $u^-(s)u^+(s)<0$ and $v$ does not change sign at $s$, then $v(s)(du/d\alpha)(s)W[u,v](s)<0$.
\end{lem}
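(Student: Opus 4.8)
The plan is to reduce the claimed inequality to a statement about the balanced values $u(s)$, $v(s)$, $p:=(du/d\alpha)(s)$ and $q:=(dv/d\alpha)(s)$, and then to read off the required sign from two packets of inequalities: one coming from $u^-(s)u^+(s)<0$ and one coming from the fact that $v$ does not change sign at $s$. First I would dispose of the degenerate obstructions. Since $u^-(s)u^+(s)<0$, neither $\Delta_{d\alpha}(s)$ nor $p$ can vanish (otherwise $u^-(s)=u^+(s)=u(s)$ and the product would be $\geq0$); write $\Delta:=\Delta_{d\alpha}(s)>0$. Also $v(s)\neq0$, for $v(s)=0$ would force $v$ to change sign at $s$ by Theorem \ref{zzdnxt401}. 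The target product $v(s)\,p\,W[u,v](s)$ is unchanged if $u$ is replaced by $-u$ or $v$ by $-v$ (each replacement flips exactly two of the three factors), so I may normalize to $p>0$ and $v(s)>0$. Under these normalizations it suffices to prove $W[u,v](s)=u(s)q-v(s)p<0$, i.e.\ $u(s)q<v(s)p$.

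Next I would extract the two inequality packets from Equations \eqref{we7610s200} and \eqref{wg9sur0165}. These give $u^-(s)=u(s)-(1-r)\Delta p$ and $u^+(s)=u(s)+r\Delta p$, so the hypothesis $u^-(s)u^+(s)<0$ together with $\Delta p>0$ yields $-r\Delta p<u(s)<(1-r)\Delta p$ (this is Theorem \ref{ertssiao0}(3) multiplied through by $p>0$). For $v$, the crucial step is to show that ``$v$ does not change sign at $s$'' together with $v(s)>0$ forces $v^-(s)\geq0$ and $v^+(s)\geq0$: indeed, if say $v^-(s)<0$, then $v(s)>0$ forces $v^+(s)>0$, whence $v^-(s)v^+(s)<0$; since $\mathcal{Z}_{\iOmega}(v)$ is isolated (Theorem \ref{10664yrlz}) the Intermediate Value Theorem \ref{ss3} then makes $v$ negative just to the left of $s$ and positive just to the right, i.e.\ $v$ changes sign at $s$, a contradiction. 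Writing these out via $v^-(s)=v(s)-(1-r)\Delta q\geq0$ and $v^+(s)=v(s)+r\Delta q\geq0$ gives $(1-r)\Delta q\leq v(s)$ and $-r\Delta q\leq v(s)$.

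Finally I would close with a short split on the sign of $q$. If $q\geq0$, then $u(s)<(1-r)\Delta p$ gives $u(s)q\leq(1-r)\Delta p\,q=(1-r)\Delta q\,p\leq v(s)p$, the chain being strict in each subcase ($q=0$ makes the left side $0<v(s)p$ directly, while $q>0$ makes the first inequality strict). If $q<0$, then multiplying $u(s)>-r\Delta p$ by $q<0$ reverses it to $u(s)q<-r\Delta p\,q=-r\Delta q\,p\leq v(s)p$. Either way $u(s)q<v(s)p$, which is exactly what was wanted after the normalization.

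I expect the genuinely delicate point to be the middle paragraph, namely converting the geometric hypothesis ``$v$ does not change sign at $s$'' into the clean one-sided sign data $v^\pm(s)\geq0$; this is where Theorems \ref{zzdnxt401}, \ref{10664yrlz}, and \ref{ss3} all enter. The concluding arithmetic and the opening normalizations are routine bookkeeping with \eqref{we7610s200} and \eqref{wg9sur0165}.
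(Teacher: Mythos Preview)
Your proof is correct and follows essentially the same route as the paper's: both arguments hinge on Theorem~\ref{ertssiao0}(3), which pins $u(s)/(du/d\alpha)(s)$ strictly inside $(-r\Delta_{d\alpha}(s),(1-r)\Delta_{d\alpha}(s))$ while forcing $v(s)/(dv/d\alpha)(s)$ outside that interval. The paper packages the endgame more compactly by writing $E=v(s)(du/d\alpha)(s)W[u,v](s)=\bigl(v(s)(du/d\alpha)(s)\bigr)^2(u_0/v_0-1)$ with $u_0=u(s)/(du/d\alpha)(s)$ and $v_0=v(s)/(dv/d\alpha)(s)$, so that $u_0/v_0<1$ finishes at once (treating $(dv/d\alpha)(s)=0$ separately); your normalization and case split on the sign of $q$ amount to the same computation. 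One small simplification in your middle paragraph: once $v^-(s)<0$ and $v^+(s)>0$, left-continuity of $v^-$ and right-continuity of $v^+$ already give $v^-<0$ on a left neighborhood and $v^+>0$ on a right neighborhood, so $v$ changes sign at $s$ directly from the definition without invoking Theorems~\ref{10664yrlz} or~\ref{ss3}.
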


\begin{proof}
Put $E=v(s)(du/d\alpha)(s)W[u,v](s)$. Observe that $v(s)\neq0$ and $(du/d\alpha)(s)\neq0$. If $(dv/d\alpha)(s)=0$, then $E=-\big(v(s)(du/d\alpha)(s)\big)^2<0$. Assume that $(dv/d\alpha)(s)\neq0$. It follows from Theorem \ref{ertssiao0} that either $v_0\leq -r\Delta_{d\alpha}(s)$ or $v_0\geq (1-r)\Delta_{d\alpha}(s)$, where $v_0=v(s)/(dv/d\alpha)(s)$. Also $-r\Delta_{d\alpha}(s)<u_0<(1-r)\Delta_{d\alpha}(s)$ where $u_0=u(s)/(du/d\alpha)(s)$.
Then
$$E=\big(v(s)\frac{du}{d\alpha}(s)\big)^2\big(\frac{u_0}{v_0}-1\big)<0.$$
This completes the proof.
\end{proof}

\begin{thm}[Sturm Separation Theorem II]
Assume that $r$, $\alpha$, and $\beta$ satisfy Hypothesis \ref{H:4.1}. Let $u$ and $v$ be linearly independent real-valued $r$-balanced solutions of $-d(dy/d\alpha)+yd\beta=0$ on $\iOmega$. Let $s<t$ be consecutive points in $\mathcal{Z}_{\iOmega}(u)$ such that $\theta>0$ on $(s,t)$. Then
\begin{enumerate}[wide=0pt, leftmargin=19pt, labelwidth=13pt, align=left]
\item If $u^-(s)u^+(s)<0$, $u^-(t)=0$, and $\theta_{r}(s)>0$, then $v$ changes sign in $[s,t)$.
\item If $u^+(s)=0$, $u^-(t)u^+(t)<0$, and $\theta_{1-r}(t)>0$, then $v$ changes sign in $(s,t]$.
\item If $u^-(s)u^+(s)<0$, $u^+(t)=0$, and $\theta_{r}(s)\theta_{1-r}(t)>0$, then $v$ changes sign in $[s,t]$.
\item If $u^-(s)=0$, $u^-(t)u^+(t)<0$, and $\theta_{r}(s)\theta_{1-r}(t)>0$, then $v$ changes sign in $[s,t]$.
\item If $u^-(s)u^+(s)<0$, $u^-(t)u^+(t)<0$, and $\theta_{r}(s)\theta_{1-r}(t)>0$, then $v$ changes sign in $[s,t]$.
\end{enumerate}
\end{thm}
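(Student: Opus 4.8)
The plan is to argue by contradiction via Lemma \ref{sccv65omn}, exactly in the spirit of the Sturm Separation Theorem I, but with the sign of the Wronskian at a genuine sign-changing endpoint supplied by Lemma \ref{stroops12}. Since $s<t$ are consecutive points of $\mathcal{Z}_{\iOmega}(u)$, the Intermediate Value Theorem \ref{ss3} lets me assume $u^-,u^+>0$ on $(s,t)$ (replacing $u$ by $-u$ if necessary). In each of the five cases I suppose, for contradiction, that $v$ does not change sign on the indicated interval; by Theorem \ref{ytrop037} I may take $v>0$ there, and linear independence forbids $u$ and $v$ from having a common one-sided zero at $s$ or $t$ (else $W^{\mp}[u,v]$ would vanish there), so the relevant one-sided values $v^\pm$ are strictly positive at the endpoints. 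The goal is then to prove $W^+[u,v](s)\,W^-[u,v](t)<0$, which by Lemma \ref{sccv65omn} (note $\theta>0$ on $(s,t)$ is assumed) forces $u$ and $v$ to be linearly dependent, the desired contradiction.

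The engine of the proof is the pair of identities $W^+[u,v]=\theta_r\,W[u,v]$ and $W^-[u,v]=\theta_{1-r}\,W[u,v]$ read off from \eqref{ze5610rr}, which give
\[ W^+[u,v](s)\,W^-[u,v](t)=\theta_r(s)\,\theta_{1-r}(t)\,W[u,v](s)\,W[u,v](t). \]
Since the hypotheses grant $\theta_r(s)\theta_{1-r}(t)>0$ precisely in the cases that need both factors, it will suffice to establish $W[u,v](s)<0$ and $W[u,v](t)>0$. At a sign-changing endpoint $s$, the relations $u^+(s)=u(s)+r\Delta_{d\alpha}(s)(du/d\alpha)(s)$ and $u^-(s)=u(s)-(1-r)\Delta_{d\alpha}(s)(du/d\alpha)(s)$ from \eqref{we7610s200}--\eqref{wg9sur0165} yield $u^+(s)-u^-(s)=\Delta_{d\alpha}(s)(du/d\alpha)(s)$; with $u^+(s)>0$ and $u^-(s)<0$ this forces $\Delta_{d\alpha}(s)>0$ and hence $(du/d\alpha)(s)>0$, so feeding $v(s)>0$ and this sign into Lemma \ref{stroops12} gives $W[u,v](s)<0$. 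Symmetrically, a sign-changing endpoint $t$ forces $u^+(t)<0$, hence $(du/d\alpha)(t)<0$, and Lemma \ref{stroops12} gives $W[u,v](t)>0$. This settles case (5) immediately and supplies the sign-changing endpoint in each of cases (1)--(4).

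It remains to handle the one-sided zeros. When the vanishing side matches the one-sided Wronskian we need --- $u^+(s)=0$ where we want $W^+[u,v](s)$ in case (2), or $u^-(t)=0$ where we want $W^-[u,v](t)$ in case (1) --- one term drops out and I compute directly, e.g.\ $W^-[u,v](t)=-v^-(t)(du/d\alpha)^-(t)>0$ because $(du/d\alpha)^-(t)<0$ (as in Theorem \ref{sturm0168935}); this is sign-definite with no factor of $\theta$, which is exactly why cases (1) and (2) require only a single condition on $\theta$. When the vanishing side does not match --- $u^-(s)=0$ in case (4) or $u^+(t)=0$ in case (3) --- I instead eliminate $u(s)$ (resp.\ $u(t)$) from the balanced Wronskian using the one-sided-value formulas to obtain the clean expressions $W[u,v](s)=-v^-(s)(du/d\alpha)(s)$ and $W[u,v](t)=-v^+(t)(du/d\alpha)(t)$, whose signs I again read off from $(du/d\alpha)(s)>0$ and $(du/d\alpha)(t)<0$; converting these through $\theta_r(s)$ and $\theta_{1-r}(t)$ then contributes the second $\theta$ factor, which is why cases (3) and (4) carry the product condition $\theta_r(s)\theta_{1-r}(t)>0$. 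In every case the displayed product comes out negative and Lemma \ref{sccv65omn} closes the argument.

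I expect the main obstacle to be bookkeeping rather than conceptual: correctly pinning down the sign of $du/d\alpha$ at each endpoint from the local behavior of $u$ on $(s,t)$ --- through the jump relation when $d\alpha$ carries an atom there, and through the integral $\int_{[x,t)}(du/d\alpha)\,d\alpha$ when it does not --- and keeping straight, across the five configurations, whether the needed one-sided Wronskian can be evaluated directly or must be routed through $W[u,v]$ and so incur a factor of $\theta_r(s)$ or $\theta_{1-r}(t)$. The one genuinely delicate verification is the uniform claim $(du/d\alpha)(t)<0$ when $u^+(t)=0$: one must treat both the atomic case $\Delta_{d\alpha}(t)>0$, where $u^-(t)=-\Delta_{d\alpha}(t)(du/d\alpha)(t)>0$, and the non-atomic case, where $u^-(t)=u^+(t)=0$ and the sign is forced by $u$ decreasing to $0$ from the left.
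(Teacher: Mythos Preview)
Your proposal is correct and follows essentially the same approach as the paper: argue by contradiction, normalize $u^-,u^+>0$ on $(s,t)$ and $v>0$ on the relevant interval, use Lemma~\ref{stroops12} to pin down the sign of $W[u,v]$ at the genuine sign-changing endpoint(s), compute the Wronskian sign directly at the one-sided-zero endpoint(s), convert through the $\theta_r,\theta_{1-r}$ factors, and invoke Lemma~\ref{sccv65omn}. The only cosmetic difference is that at the mismatched one-sided zeros in cases (3) and (4) you obtain $W[u,v]=-v^\mp(du/d\alpha)$ by algebraic elimination, whereas the paper evaluates $W^\pm[u,v]$ and appeals to Theorem~\ref{ertssiao0} for the sign of $(du/d\alpha)^\pm$; the two computations are equivalent.
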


\begin{proof}
The first and the second statements have similar proof so we only prove the first statement. We may assume $u^-,u^+>0$ on $(s,t)$. Then $u^-(s)<0$, $u^+(s)>0$, $(du/d\alpha)(s)>0$, and $(du/d\alpha)^-(t)<0$. Assume to the contrary that $v$ does not change sign in $[s,t)$. We may assume $v>0$ on $[s,t)$. Then $W^-[u,v](t)>0$. With the aid of Lemma \ref{stroops12}, $W[u,v](s)<0$. But $\theta_{r}(s)>0$, so $W^+[u,v](s)<0$. It follows from Lemma \ref{sccv65omn} that $u$ and $v$ are linearly dependent. This contradiction completes the proof of the first statement.

The third and the fourth statements have similar proofs so we only prove the third statement. We may assume $u^-,u^+>0$ on $(s,t)$. Then $u^-(s)<0$, $u^+(s)>0$, and $(du/d\alpha)(s)>0$. Since $u$ is not the trivial solution it follows that $(du/d\alpha)^+(t)\neq0$. With the aid of Theorem \ref{ertssiao0}, $\theta_{r}(t)(du/d\alpha)^+(t)<0$. Now assume to the contrary that $v$ does not change sign in $[s,t]$. We may assume $v>0$ on
$[s,t]$. Since $v$ does not change sign at $t$ and since $u$ and $v$ are linearly independent it follows that $v^+(t)>0$. Then $W^-[u,v](t)/\theta_{1-r}(t)>0$. With the aid of Lemma \ref{stroops12}, $W^+[u,v](s)/\theta_{r}(s)=W[u,v](s)<0$. But $\theta_{r}(s)\theta_{1-r}(t)>0$, so $W^-[u,v](t)W^+[u,v](s)<0$. This contradiction completes the proof.

Now we prove the last statement. We may assume $u^-,u^+>0$ on $(s,t)$. Then $(du/d\alpha)(s)>0$ and $(du/d\alpha)(t)<0$. Assume to the contrary that $v$ does not change sign in $[s,t]$. Then we may assume $v>0$ on $[s,t]$. Then Lemma \ref{stroops12} tells us that $W^+[u,v](s)/\theta_{r}(s)=W[u,v](s)<0$ and $W^-[u,v](t)/\theta_{1-r}(t)=W[u,v](t)>0$. But $\theta_{r}(s)\theta_{1-r}(t)>0$ so $W^-[u,v](t)W^+[u,v](s)<0$. This contradiction completes the proof.
\end{proof}

We close this section by deriving a formula for $W^-[u,v]$. Equation \eqref{strj0121} tells us that $W^-[u,v]$ is a left continuous solution of locally bounded variation to the first order differential equation $dy=yd\gamma$, where
$$d\gamma=\frac{(1-2r)\Delta_{d\alpha}}{\theta_{1-r}}d\beta.$$
Now it is straightforward to verify that $1+\Delta_{d\gamma}=\theta$. It follows from Theorem \ref{th01} that any initial value problems consisting of the differential equation $dy=yd\gamma$ has a unique left-continuous solution of locally bounded variation. According to Picard's iterations (see Bennewitz \cite{MR1004432} for a detailed description of Picard's iterations) if $x<y$ in $\iOmega$, then $W^-[u,v](y)=W^-[u,v](x)S(x,y)$, where $S(x,y)$ is given by
$$1+\sum_{t_1}\Delta_{d\gamma}(t_1)+\sum_{t_1<t_2}\Delta_{d\gamma}(t_1)\Delta_{d\gamma}(t_2)+\sum_{t_1<t_2<t_3}\Delta_{d\gamma}(t_1)\Delta_{d\gamma}(t_2)\Delta_{d\gamma}(t_3)+\dots$$
where all indices of the above summations vary over the countably many points in $[x,y)$ where $\Delta_{d\gamma}$ is different from $0$.
This series has the infinite product representation (see problem 3.8.26 in Kaczor and Nowak \cite{MR1751334})
$$\prod_{t\in[x,y)}\theta(t).$$
Therefore if $x<y$ in $\iOmega$, then
$$W^-[u,v](y)=W^-[u,v](x)\prod_{t\in[x,y)}\theta(t).$$

\section{sturm comparison theorem}\label{sct0sct}

In this section our goal is to develop a Sturm-type comparison theorem for balanced solutions ($r=1/2$) of the two differential equations
\begin{equation}\label{rybzp67298}
-d(dy/d\alpha)+yd\beta_1=0
\end{equation}
and
\begin{equation}\label{tyalx76012}
-d(dy/d\alpha)+yd\beta_2=0.
\end{equation}

Before we begin we require the following hypothesis to be satisfied. We introduce the new notation $\omega_j=1-\Delta_{d\alpha}\Delta_{d\beta_j}/4$ for $j=1,2$ that replaces the notation $\theta$.

\begin{hyp}\label{eroq03892}
$\alpha$ is strictly increasing on $\iOmega$, $\beta_1,\beta_2\in\BVl(\iOmega,\mathbb{R})$, and $\omega_1(x)\neq0$ and $\omega_2(x)\neq0$ for all $x\in\iOmega$.
\end{hyp}

Assume that $u$ and $v$ are nontrivial balanced solutions of Equations \eqref{rybzp67298} and \eqref{tyalx76012}, respectively. 
We introduce the Wronskian type quantity
$$\tilde W[v,u]=v\frac{du}{d\alpha}-u\frac{dv}{d\alpha}.$$
Since each of $u$, $v$, $du/d\alpha$, and $dv/d\alpha$ is balanced it follows from Formula \eqref{erybs0341} that
\begin{equation}\label{we76rswa}
d\tilde W[v,u]=uvd(\beta_1-\beta_2).
\end{equation}

The next two comparison theorems are the main results of this section.

\begin{thm}[Sturm Comparison Theorem I]\label{rnzp13ys}
Assume that $\alpha$, $\beta_1$, and $\beta_2$ satisfy Hypothesis \ref{eroq03892}. Let $u$ and $v$ be nontrivial real-valued balanced solutions of Equations \eqref{rybzp67298} and \eqref{tyalx76012}, respectively, on $\iOmega$. Let $s<t$ be consecutive points in $\mathcal{Z}_{\iOmega}(u)$.
\begin{enumerate}[wide=0pt, leftmargin=19pt, labelwidth=13pt, align=left]
\item If $u^+(s)=u^-(t)=0$ and $d(\beta_1-\beta_2)$ is a nontrivial positive measure on $(s,t)$, then $v$ changes sign in $(s,t)$.
\item If $u^-(s)=u^-(t)=0$, $\omega_1(s)>0$, and $d(\beta_1-\beta_2)$ is a positive measure on $[s,t)$, then $v$ changes sign in $[s,t)$.
\item If $u^+(s)=u^+(t)=0$, $\omega_1(t)>0$, and $d(\beta_1-\beta_2)$ is a positive measure on $(s,t]$, then $v$ changes sign in $(s,t]$.
\item If $u^-(s)=u^+(t)=0$, $\omega_1(s)>0$, $\omega_1(t)>0$, and $d(\beta_1-\beta_2)$ is a positive measure on $[s,t]$, then $v$ changes sign in $[s,t]$.
\end{enumerate}
\end{thm}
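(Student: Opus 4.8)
The plan is to prove all four parts by contradiction, using Equation \eqref{we76rswa} as a monotonicity engine for the quantity $\tilde{W}[v,u]$. First I would normalize: since $s<t$ are consecutive points of $\mathcal{Z}_{\iOmega}(u)$, the argument of Theorem \ref{ytrop037} lets me assume $u^-,u^+>0$ on $(s,t)$, and I would read off the boundary slopes from the vanishing data exactly as in the separation theorem, e.g.\ from $0<u^+(x)=\int_{(s,x]}(du/d\alpha)\,d\alpha$ one gets $(du/d\alpha)^+(s)>0$ when $u^+(s)=0$, and symmetrically $(du/d\alpha)^-(t)<0$ when $u^-(t)=0$. At a one-sided zero governed by an $\omega_1$-hypothesis (say $u^-(s)=0$ with $\omega_1(s)>0$) Theorem \ref{ertssiao0} forces $u$ to change sign at $s$ and yields the \emph{strict} slope $(du/d\alpha)^-(s)>0$ (treating $\Delta_{d\alpha}(s)\neq0$ via the formula in Theorem \ref{ertssiao0} and $\Delta_{d\alpha}(s)=0$ via Theorem \ref{zzdnxt401} separately). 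Assuming $v$ does not change sign on the claimed interval $J$, the argument of Theorem \ref{ytrop037} again lets me take $v>0$ on $J$.

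With $u,v>0$ and $d(\beta_1-\beta_2)$ a positive measure on $J$, Equation \eqref{we76rswa} gives $d\tilde{W}[v,u]\geq0$ on $J$, so $\tilde{W}[v,u]$ is non-decreasing there. The heart of the matter is to evaluate the correct one-sided limits of $\tilde{W}[v,u]$ at the two ends of $J$: the vanishing of $u$ kills the term $u\,(dv/d\alpha)$, leaving a single product $v\cdot(du/d\alpha)$. Using the boundary slopes above (and $v^\pm\geq0$ at the ends, since $v>0$ on $J$ and $v$ does not change sign there), the left-end value $L$ satisfies $L\geq0$ and the right-end value $R$ satisfies $R\leq0$. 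In part (1) I take $L=\tilde{W}^+[v,u](s)=v^+(s)(du/d\alpha)^+(s)$ and $R=\tilde{W}^-[v,u](t)=v^-(t)(du/d\alpha)^-(t)$; in part (2) I take $L=\tilde{W}^-[v,u](s)$ and $R=\tilde{W}^-[v,u](t)$ to match the half-open interval $[s,t)$; parts (3) and (4) use the mirror/combined choices $\tilde{W}^+$ at $t$ and $\tilde{W}^\mp$ at $s$. Since $\int_J d\tilde{W}[v,u]=R-L\geq0$, monotonicity gives $L\leq R$, so $0\leq L\leq R\leq0$; hence $L=R=0$ and $\tilde{W}[v,u]\equiv0$ on $J$, which by \eqref{we76rswa} forces $uv\,d(\beta_1-\beta_2)=0$, i.e.\ $d(\beta_1-\beta_2)=0$ on $J$ because $uv>0$ there.

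It remains to extract a contradiction from the collapse $d(\beta_1-\beta_2)=0$ on $J$. In part (1) this directly contradicts the hypothesis that the measure is \emph{nontrivial} on $(s,t)$; this is precisely why only part (1) needs nontriviality, since its clean endpoints ($u^+(s)=u^-(t)=0$) carry no $\omega$-information and yield no strictness. In parts (2)--(4) the $\omega_1$-endpoint finishes the job: there $L=0$ (resp.\ $R=0$) together with the strict slope $(du/d\alpha)^-(s)>0$ (resp.\ $(du/d\alpha)^+(t)<0$) forces $v^-(s)=0$ (resp.\ $v^+(t)=0$); since $v$ does not change sign at that point, Theorem \ref{ertssiao0} applied to the $\beta_2$-equation forces $\omega_2(s)<0$ (resp.\ $\omega_2(t)<0$); but $d(\beta_1-\beta_2)=0$ on $J$ gives $\Delta_{d\beta_1}=\Delta_{d\beta_2}$ at that endpoint, whence $\omega_1=\omega_2<0$, contradicting $\omega_1>0$. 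I expect the main obstacle to be exactly this boundary bookkeeping: establishing the strict signs of $(du/d\alpha)^{\pm}$ at the $\omega_1$-endpoints through Theorem \ref{ertssiao0} while correctly handling atoms of $d\alpha$, and recognizing that the degenerate case $\tilde{W}[v,u]\equiv0$ is defeated not by an inherited sign change of $v$ but by the forced identity $\omega_1=\omega_2$ at the endpoint.
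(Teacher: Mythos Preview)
Your overall architecture matches the paper's: normalize $u^\mp>0$ on $(s,t)$, assume $v>0$ on the claimed interval $J$, and integrate \eqref{we76rswa} over $J$ to compare the boundary values of $\tilde W[v,u]$. For part~(1) your argument and the paper's are essentially identical; the paper writes it as the one-line contradiction $0<\int_{(s,t)} uv\,d(\beta_1-\beta_2)=R-L\le 0$, which is just a compressed version of your $0\le L\le R\le 0$.

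For parts (2)--(4) the paper takes a shorter path than yours, and your longer detour has a loose end. The paper observes \emph{before} integrating that the positive-measure hypothesis at the $\omega_1$-endpoint (say $s$) gives $\Delta_{d\beta_1}(s)\ge\Delta_{d\beta_2}(s)$, which together with $\omega_1(s)>0$ forces $\omega_2(s)>0$; then, since $v(s)>0$ and $v$ does not change sign at $s$, Theorem~\ref{ertssiao0} applied to the $\beta_2$-equation yields $v^-(s)>0$ \emph{strictly}. Hence $L=v^-(s)(du/d\alpha)^-(s)>0$ strictly, and the contradiction $0\le R-L<0$ is immediate. Your route instead allows $L=0$, collapses to $\tilde W\equiv 0$, infers $v^-(s)=0$, and then tries to reach $\omega_1(s)=\omega_2(s)<0$ via $d(\beta_1-\beta_2)=0$ on $J$. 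The loose end is the claim ``$uv>0$ on $J$'': at the endpoint $s$ one only has $u(s)=u^+(s)/2\ge 0$, and $u(s)=0$ occurs precisely when $\Delta_{d\alpha}(s)=0$; in that case you cannot read off $\Delta_{d\beta_1}(s)=\Delta_{d\beta_2}(s)$ from the vanishing integral. The patch is easy (if $\Delta_{d\alpha}(s)=0$ then $\omega_2(s)=1>0$ directly, so $v^-(s)=0$ already forces $v$ to change sign at $s$), but once you notice that $\omega_2(s)>0$ is available up front from the hypotheses, it is cleaner to use it to get strict $L>0$ and skip the collapse argument entirely.
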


\begin{proof}
To prove the first statement we may assume that $u^{\mp}>0$ on $(s,t)$. It follows that $(du/d\alpha)^+(s)>0$ and $(du/d\alpha)^-(t)<0$. Assume to the contrary that $v$ does not change sign in $(s,t)$. Then we may assume that $v>0$ on $(s,t)$. It follows that $v^-(t)\geq0$ and $v^+(s)\geq0$. With the aid of \eqref{we76rswa} we obtain
$$0<\int_{(s,t)}uvd(\beta_1-\beta_2)=v^-(t)\Big(\frac{du}{d\alpha}\Big)^-(t)-v^+(s)\Big(\frac{du}{d\alpha}\Big)^+(s)\leq0.$$
This contradiction completes the proof of the first statement.

The second and the third statements have similar proofs so we only prove the second statement. We may assume that $u^{\mp}>0$ on $(s,t)$. Then $u(s)\geq0$ and $(du/d\alpha)^-(t)<0$. Since $\omega_1(s)>0$ it follows from Theorem \ref{ertssiao0} that $(du/d\alpha)^-(s)>0$. Assume to the contrary that $v$ does not change sign in $[s,t)$. Then we may assume that $v>0$ on $[s,t)$. It follows that $v^-(t)\geq0$. Since $\Delta_{d\beta_1}(s)\geq\Delta_{d\beta_2}(s)$ and since $\omega_1(s)>0$ it follows that $\omega_2(s)>0$. Therefore $v^{\mp}(s)>0$.
Thus, by Equation \eqref{we76rswa},
$$0\leq\int_{[s,t)}uvd(\beta_1-\beta_2)=v^-(t)\Big(\frac{du}{d\alpha}\Big)^-(t)-v^-(s)\Big(\frac{du}{d\alpha}\Big)^-(s)<0.$$
This contradiction completes the proof.

To prove the forth statement we may assume $u^{\mp}>0$ on $(s,t)$. Then $u(s)\geq0$ and $u(t)\geq0$. Since $\omega_1(s)>0$ and $\omega_1(t)>0$ it follows from Theorem \ref{ertssiao0} that $(du/d\alpha)^-(s)>0$ and $(du/d\alpha)^+(t)<0$. Assume to the contrary that $v$ does not change sign in $[s,t]$. Then we may assume that $v>0$ on $[s,t]$. Since $\omega_2(s)>0$ and $\omega_2(t)>0$ it follows that $v^-(s)>0$ and $v^+(t)>0$. It follows from Equation \eqref{we76rswa} that
$$0\leq\int_{[s,t]}uvd(\beta_1-\beta_2)=v^+(t)\Big(\frac{du}{d\alpha}\Big)^+(t)-v^-(s)\Big(\frac{du}{d\alpha}\Big)^-(s)<0.$$
This contradiction completes the proof.
\end{proof}

The conclusion of the forth statement in Theorem \ref{rnzp13ys} is valid if $\omega_1(s)<0$, $\omega_1(t)<0$, and $d(\beta_2-\beta_1)$ is a positive measure on $[s,t]$.

To prove the second part of Sturm comparison theorem we need the following result.

\begin{lem}\label{dvzzzop034}
Assume that $\alpha$, $\beta_1$, and $\beta_2$ satisfy Hypothesis \ref{eroq03892}. Let $u$ and $v$ be nontrivial real-valued balanced solutions of Equations \eqref{rybzp67298} and \eqref{tyalx76012}, respectively, on $\iOmega$. If for some $s\in\iOmega$ $v$ does not change sign at $s$ and $u^-(s)u^+(s)<0$, then $v(s)(du/d\alpha)(s)\tilde W[v,u](s)>0$. If, in addition, $\omega_1(s)>0$ and $\Delta_{d\beta_1}(s)\geq\Delta_{d\beta_2}(s)$, then $v(s)(du/d\alpha)(s)\tilde W^{\mp}[v,u](s)>0$.
\end{lem}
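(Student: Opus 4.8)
The plan is to handle the two assertions in turn, deriving the first exactly as in Lemma \ref{stroops12} and then bootstrapping from it to the one-sided quantities. For the first assertion, set $E=v(s)\frac{du}{d\alpha}(s)\tilde W[v,u](s)$. Because $v$ does not change sign at $s$, Theorem \ref{zzdnxt401} gives $v(s)\neq0$; because $u^-(s)u^+(s)<0$, Equations \eqref{we7610s200} and \eqref{wg9sur0165} force $(du/d\alpha)(s)\neq0$ (otherwise $u^{\pm}(s)=u(s)$), and the third statement of Theorem \ref{ertssiao0} gives both $\Delta_{d\alpha}(s)>0$ and $|u_0|<\tfrac12\Delta_{d\alpha}(s)$, where $u_0=u(s)/(du/d\alpha)(s)$. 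If $(dv/d\alpha)(s)=0$ then $\tilde W[v,u](s)=v(s)(du/d\alpha)(s)$ and $E=(v(s)(du/d\alpha)(s))^2>0$. Otherwise, with $v_0=v(s)/(dv/d\alpha)(s)$, the no-sign-change hypothesis yields $v_0\le-\tfrac12\Delta_{d\alpha}(s)$ or $v_0\ge\tfrac12\Delta_{d\alpha}(s)$ via the third statement of Theorem \ref{ertssiao0}, and a direct computation gives $E=\big((du/d\alpha)(s)(dv/d\alpha)(s)\big)^2\,v_0(v_0-u_0)$. The two sign constraints make $v_0(v_0-u_0)>0$, so $E>0$. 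This computation is word-for-word that of Lemma \ref{stroops12}, which never uses that the two solutions share an equation.

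For the second assertion the same-equation shortcut \eqref{ze5610rr} is unavailable, since $u$ and $v$ solve \eqref{rybzp67298} and \eqref{tyalx76012} with different potentials and $\tilde W[v,u]$ is not balanced (a product of balanced functions need not be balanced), so $\tilde W^{\pm}$ is not a scalar multiple of $\tilde W$. Instead I would compute $\tilde W^{\pm}[v,u](s)=v^{\pm}(s)(du/d\alpha)^{\pm}(s)-u^{\pm}(s)(dv/d\alpha)^{\pm}(s)$ by substituting the one-sided values of $u^{\pm},v^{\pm},(du/d\alpha)^{\pm},(dv/d\alpha)^{\pm}$ supplied by \eqref{we7610s200} and \eqref{wg9sur0165} with $r=1/2$, expand, multiply through by $v(s)(du/d\alpha)(s)$, and reorganize with the help of $\tfrac14\Delta_{d\alpha}\Delta_{d\beta_j}=1-\omega_j$. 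The target is the pair of identities
\begin{equation*}
v(s)\frac{du}{d\alpha}(s)\,\tilde W^{\pm}[v,u](s)=\omega_1(s)\,v(s)\frac{du}{d\alpha}(s)\,\tilde W[v,u](s)+\tfrac12\big(\Delta_{d\beta_1}(s)-\Delta_{d\beta_2}(s)\big)\Big(\frac{du}{d\alpha}(s)\,v(s)\Big)^{2}\Big(\tfrac12\Delta_{d\alpha}(s)\pm u_0\Big),
\end{equation*}
with the upper sign for $\tilde W^{+}$ and the lower for $\tilde W^{-}$.

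Granting the identities, the conclusion is immediate. The first summand is $\omega_1(s)>0$ times the quantity shown positive in the first assertion, hence positive; the second summand is nonnegative because $\Delta_{d\beta_1}(s)\ge\Delta_{d\beta_2}(s)$, the square is nonnegative, and $\tfrac12\Delta_{d\alpha}(s)\pm u_0>0$ by the bound $|u_0|<\tfrac12\Delta_{d\alpha}(s)$ from Theorem \ref{ertssiao0}. Therefore $v(s)(du/d\alpha)(s)\tilde W^{-}[v,u](s)>0$ and $v(s)(du/d\alpha)(s)\tilde W^{+}[v,u](s)>0$, which is precisely the claim about $\tilde W^{\mp}$.

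I expect the only real work to lie in verifying the two displayed identities. The expansion produces several cross terms carrying the factor $(du/d\alpha)(s)(dv/d\alpha)(s)$, and the crux is that after using $\omega_2-\omega_1=\tfrac14\Delta_{d\alpha}(\Delta_{d\beta_1}-\Delta_{d\beta_2})$ these recombine so that the coefficient of $\tilde W[v,u](s)$ collapses to exactly $\omega_1(s)$ while the remainder factors cleanly through $\tfrac12\Delta_{d\alpha}(s)\pm u_0$; this recombination is the point where it matters that $\Delta_{d\alpha}\,d\beta_j=\Delta_{d\beta_j}\,d\alpha$ at the level of singletons. These identities are purely algebraic in the balanced values and their one-sided limits, so the degenerate case $(dv/d\alpha)(s)=0$ requires no separate treatment for the second assertion (one checks directly that the identities persist). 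A secondary bookkeeping point, exactly as in Lemma \ref{stroops12}, is the implication ``$v^-(s)v^+(s)<0\Rightarrow v$ changes sign at $s$'', a consequence of the isolatedness of $\mathcal{Z}_{\iOmega}(v)$ in Theorem \ref{10664yrlz} together with the Intermediate Value Theorem \ref{ss3} and left-continuity; its contrapositive is what converts the no-sign-change hypothesis into the constraint on $v_0$ used in the first assertion.
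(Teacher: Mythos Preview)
Your proposal is correct and follows essentially the same route as the paper's proof: the first assertion is handled by the case split on whether $(dv/d\alpha)(s)=0$ and the ratio comparison $|u_0|<\tfrac12\Delta_{d\alpha}(s)\le|v_0|$ from Theorem~\ref{ertssiao0} (your factorization $E=\big((du/d\alpha)(s)(dv/d\alpha)(s)\big)^2 v_0(v_0-u_0)$ is algebraically equivalent to the paper's $(v(s)(du/d\alpha)(s))^2(1-u_0/v_0)$), and the second assertion is reduced to the first via exactly the identity you target, which the paper states in the equivalent form $\tilde W^{\mp}[v,u](s)=\omega_1(s)\tilde W[v,u](s)\mp\tfrac12(\Delta_{d\beta_1}(s)-\Delta_{d\beta_2}(s))v(s)(du/d\alpha)(s)\big(u_0\mp\tfrac12\Delta_{d\alpha}(s)\big)$. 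Your extra remarks (why $v(s)\neq0$, why the no-sign-change hypothesis yields $v^-(s)v^+(s)\ge0$, and that the identity holds regardless of $(dv/d\alpha)(s)$) are welcome clarifications the paper leaves implicit.
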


\begin{proof}
We begin by pointing out that $v(s)\neq0$ and $(du/d\alpha)(s)\neq0$. If $(dv/d\alpha)(s)=0$, then $v(s)(du/d\alpha)(s)\tilde W[v,u](s)=(v(s)(du/d\alpha)(s))^2>0$. Assume $(dv/d\alpha)(s)\neq0$. Then, with the aid of Theorem \ref{ertssiao0}, $\vert u_0\vert<\Delta_{d\alpha}(s)/2$ and $\vert v_0\vert\geq\Delta_{d\alpha}(s)/2$ where $u_0=u(s)/(du/d\alpha)(s)$ and $v_0=v(s)/(dv/d\alpha)(s)$. It follows that $\vert u_0/v_0\vert<1$.
This completes the proof of the first statement since
$$v(s)\frac{du}{d\alpha}(s)\tilde W[v,u](s)=\big(v(s)\frac{du}{d\alpha}(s)\big)^2\big(1-\frac{u_0}{v_0}\big)>0.$$

Now we prove the second statement. With the aid of Equations \eqref{we7610s200} and \eqref{wg9sur0165} it can be shown that
$$\tilde W^{\mp}[v,u](s)=\omega_1(s)\tilde W[v,u](s){\mp}\frac{1}{2}\big(\Delta_{d\beta_1}(s)-\Delta_{d\beta_2}(s)\big)v(s)\frac{du}{d\alpha}(s)\big(u_0{\mp}\frac{\Delta_{d\alpha}(s)}{2}\big).$$
Since $\omega_1(s)>0$ it follows from the proof of the first statement that $$\omega_1(s)v(s)(du/d\alpha)(s)\tilde W[v,u](s)>0.$$
Also since $\Delta_{d\beta_1}(s)\geq\Delta_{d\beta_2}(s)$ and since $\vert u_0\vert<\Delta_{d\alpha}(s)/2$ it follows that
$${\mp}\frac{1}{2}\big(\Delta_{d\beta_1}(s)-\Delta_{d\beta_2}(s)\big)\big(v(s)\frac{du}{d\alpha}(s)\big)^2\big(u_0{\mp}\frac{\Delta_{d\alpha}(s)}{2}\big)\geq0.$$
This completes the proof.
\end{proof}

\begin{thm}[Sturm Comparison Theorem II]
Assume that $\alpha$, $\beta_1$, and $\beta_2$ satisfy Hypothesis \ref{eroq03892}. Let $u$ and $v$ be nontrivial real-valued balanced solutions of Equations \eqref{rybzp67298} and \eqref{tyalx76012}, respectively, on $\iOmega$. Let $s<t$ be consecutive points in $\mathcal{Z}_{\iOmega}(u)$.
\begin{enumerate}[wide=0pt, leftmargin=19pt, labelwidth=13pt, align=left]
\item If $u^-(s)u^+(s)<0$, $u^-(t)=0$, $\omega_1(s)>0$, and $d(\beta_1-\beta_2)$ is a positive measure on $[s,t)$, then $v$ changes sign in $[s,t)$.
\item If $u^+(s)=0$, $u^-(t)u^+(t)<0$, $\omega_1(t)>0$, and $d(\beta_1-\beta_2)$ is a positive measure on $(s,t]$, then $v$ changes sign in $(s,t]$.
\item If $u^-(s)u^+(s)<0$, $u^+(t)=0$, $\omega_1(s)>0$, $\omega_1(t)>0$, and $d(\beta_1-\beta_2)$ is a positive measure on $[s,t]$, then $v$ changes sign in $[s,t]$.
\item If $u^-(s)=0$, $u^-(t)u^+(t)<0$, $\omega_1(s)>0$, $\omega_1(t)>0$, and $d(\beta_1-\beta_2)$ is a positive measure on $[s,t]$, then $v$ changes sign in $[s,t]$.
\item If $u^-(s)u^+(s)<0$, $u^-(t)u^+(t)<0$, $\omega_1(s)>0$, $\omega_1(t)>0$, and $d(\beta_1-\beta_2)$ is a positive measure on $[s,t]$, then $v$ changes sign in $[s,t]$.
\end{enumerate}
\end{thm}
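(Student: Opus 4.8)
The plan is to run the same contradiction scheme used in Sturm Comparison Theorem I (Theorem \ref{rnzp13ys}) and in Sturm Separation Theorem II, with the identity $d\tilde W[v,u]=uv\,d(\beta_1-\beta_2)$ from \eqref{we76rswa} as the engine and Lemma \ref{dvzzzop034} supplying the sign of the boundary Wronskian at any endpoint where $u$ genuinely changes sign. As before the five statements fall into symmetric pairs: (1) and (2) are reflections of one another (one endpoint a one-sided zero, the other a sign-change point), as are (3) and (4), while (5) is the doubly sign-changing case. I would therefore prove (1), (3) and (5) and remark that (2) and (4) follow verbatim after interchanging the roles of $s$ and $t$.

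For each statement I begin with the usual normalization. Since $s<t$ are consecutive in $\mathcal{Z}_{\iOmega}(u)$, Theorem \ref{ytrop037} lets me assume $u^\mp>0$ on $(s,t)$; integrating $du=(du/d\alpha)\,d\alpha$ near the endpoints then gives $(du/d\alpha)^+(s)>0$ and $(du/d\alpha)^-(t)<0$ at one-sided zeros, while at a sign-change endpoint Theorem \ref{ertssiao0} gives $(du/d\alpha)(s)>0$ (resp.\ $(du/d\alpha)(t)<0$). I then assume for contradiction that $v$ does not change sign on the asserted interval, normalize to $v>0$ there via Theorem \ref{ytrop037}, and --- where $\omega_1>0$ holds --- deduce $\omega_2>0$ from $\Delta_{d\beta_1}\ge\Delta_{d\beta_2}$ (a consequence of the positivity of $d(\beta_1-\beta_2)$ on the relevant singleton) in order to force the one-sided values $v^\mp>0$, exactly as in the proof of Theorem \ref{rnzp13ys}.

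The heart of the argument is to integrate \eqref{we76rswa} over an interval whose endpoints are included or excluded according to the behaviour of $u$ there. At a one-sided zero the matching one-sided Wronskian collapses to a single product: for instance $\tilde W^-[v,u](t)=v^-(t)(du/d\alpha)^-(t)\le 0$ when $u^-(t)=0$, and $\tilde W^+[v,u](t)=v^+(t)(du/d\alpha)^+(t)<0$ when $u^+(t)=0$ (using $(du/d\alpha)^+(t)<0$, which follows from Theorem \ref{ertssiao0} and $\omega_1(t)>0$). At a sign-change endpoint I instead invoke the second part of Lemma \ref{dvzzzop034}: since there $\omega_1>0$ and $\Delta_{d\beta_1}\ge\Delta_{d\beta_2}$, it yields $v(s)(du/d\alpha)(s)\,\tilde W^{\mp}[v,u](s)>0$, so dividing by the positive factor $v(s)(du/d\alpha)(s)$ pins down the sign of the interior-facing one-sided Wronskian. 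The integrated identity then reads $\int uv\,d(\beta_1-\beta_2)=(\text{top boundary})-(\text{bottom boundary})$, where the left side is $\ge 0$ by positivity of the measure together with $uv\ge 0$, while the two boundary terms are arranged to make the right side strictly negative; this contradiction forces $v$ to change sign.

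The step I expect to require the most care is choosing these integration intervals so that every sign is consistent. The essential constraint, absent from Theorem \ref{rnzp13ys}, is that at a sign-change endpoint $u(s)$ itself has indeterminate sign, so the singleton contribution $u(s)v(s)\Delta_{d(\beta_1-\beta_2)}(s)$ to a closed-endpoint integral need not be non-negative; one must therefore exclude such an endpoint (e.g.\ integrate over $(s,t)$ in statements (1) and (5), and over $(s,t]$ in statement (3)), carrying the boundary through the one-sided Wronskian controlled by Lemma \ref{dvzzzop034} instead. Conversely, at a one-sided zero the included endpoint contributes $u(\cdot)v(\cdot)\Delta_{d(\beta_1-\beta_2)}(\cdot)$ with $u(\cdot)=\tfrac12 u^{\text{(interior)}}(\cdot)\ge 0$, so the singleton is harmless. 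Once the interval is matched in this way to the stated positivity domain of $d(\beta_1-\beta_2)$, the remaining sign computations are routine.
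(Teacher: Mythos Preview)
Your proposal is correct and follows essentially the same route as the paper: argue by contradiction, normalize $u^\mp>0$ on $(s,t)$ and $v>0$ on the relevant interval, use Lemma~\ref{dvzzzop034} for the one-sided Wronskian at each sign-change endpoint, and integrate \eqref{we76rswa} over $(s,t)$ in statements (1) and (5) and over $(s,t]$ in statement (3), exactly as the paper does. The one small redundancy is your deduction of $\omega_2>0$ to force strict positivity of $v^\mp$ at a one-sided-zero endpoint; the paper omits this because only $v^\mp\ge 0$ is needed there (the strict sign in the contradiction comes from the $\tilde W^+[v,u](s)>0$ term supplied by Lemma~\ref{dvzzzop034}).
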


\begin{proof}
The first and the second statements have similar proofs so we only prove the first statement. We may assume that $u^{\mp}>0$ on $(s,t)$. Then $u>0$ on $(s,t)$, $(du/d\alpha)(s)>0$, and $(du/d\alpha)^-(t)<0$. Assume to the contrary that $v$ does not change sign in $[s,t)$. Then we may assume $v>0$ on $[s,t)$. Then $v^-(t)\geq0$. Since $\omega_1(s)>0$ and since $\Delta_{d\beta_1}(s)\geq\Delta_{d\beta_2}(s)$ it follows from Lemma \ref{dvzzzop034} that $\tilde W^+[v,u](s)>0$. With the aid from Equation \eqref{we76rswa} we obtain
$$0\leq\int_{(s,t)}uvd(\beta_1-\beta_2)=v^-(t)\Big(\frac{du}{d\alpha}\Big)^-(t)-\tilde W^+[v,u](s)<0.$$
This contradiction completes the proof.

The third and the forth statements have similar proofs so we only prove the third statement. We may assume that $u^{\mp}>0$ on $(s,t)$. Then $u>0$ on $(s,t)$ and $u(t)\geq0$. Assume to the contrary that $v$ does not change sign in $[s,t]$. Then we may assume $v>0$ on $[s,t]$. Since $\omega_1(s)>0$ and since $\Delta_{d\beta_1}(s)\geq\Delta_{d\beta_2}(s)$ it follows from Lemma \ref{dvzzzop034} that $\tilde W^+[v,u](s)>0$. Since $\omega_1(t)>0$ it follows from Theorem \ref{ertssiao0} that $(du/d\alpha)^+(t)<0$. With the aid from Equation \eqref{we76rswa} we obtain
$$0\leq\int_{(s,t]}uvd(\beta_1-\beta_2)=v^+(t)\Big(\frac{du}{d\alpha}\Big)^+(t)-\tilde W^+[v,u](s)<0.$$
This contradiction completes the proof.

Now we prove the last statement. We may assume that $u^{\mp}>0$ on $(s,t)$. Then $u>0$ on $(s,t)$, $(du/d\alpha)(s)>0$, and $(du/d\alpha)(t)<0$. Assume to the contrary that $v$ does not change sign in $[s,t]$. Then we may assume $v>0$ on $[s,t]$. Since $\omega_1(s)>0$ and since $\Delta_{d\beta_1}(s)\geq\Delta_{d\beta_2}(s)$ it follows from Lemma \ref{dvzzzop034} that $\tilde W^+[v,u](s)>0$. Since a similar argument works at $t$ it follows that $\tilde W^-[v,u](t)<0$. Then it follows from Equation \eqref{we76rswa} that
$$0\leq\int_{(s,t)}uvd(\beta_1-\beta_2)=\tilde W^-[v,u](t)-\tilde W^+[v,u](s)<0.$$
This contradiction completes the proof.
\end{proof}

\bibliographystyle{plain}
%\bibliography{ZS1}

\end{document}